\documentclass{autart}

\usepackage{alltt}
\usepackage{graphics}
\usepackage{graphicx,epsfig}
\usepackage{subcaption}
\usepackage{epsfig}
\usepackage{psfrag}
\usepackage{makeidx}
\usepackage{tabularx}
\usepackage{multicol,multirow}
\usepackage{enumitem}

\textwidth=18.1cm
\textheight=24.6cm \topmargin=0cm

\usepackage{amscd,amsfonts,amsmath}
\usepackage{amssymb}
\usepackage{graphicx}
\usepackage{tikz}
\usetikzlibrary{decorations.pathreplacing,calc}
\usetikzlibrary{matrix,decorations.pathreplacing,calc}
                            
\usepackage{color}


\newcommand{\fin}{\hfill$\Box$}

\newcommand{\n}{\normalfont}

\newcommand{\G}{K}
\newcommand{\Lo}{\mathbb{L}}
\newcommand{\sLo}{\sigma\mathbb{L}}
\newcommand{\Up}{\Upsilon_{\mathbf{P}}}
\newcommand{\Ud}{\Upsilon_{\mathbf{D}}}
\newcommand{\Sd}{S_{\mathbf{D}}}
\newcommand{\Qp}{Q_{\mathbf{P}}}
\newcommand{\Cp}{C_{\mathbf{P}}}
\newcommand{\Rp}{R_{\mathbf{P}}}

\providecommand{\rset}[1]{\mathbb{R}^}

\DeclareMathOperator{\diag}{diag}
\DeclareMathOperator{\rank}{rank}

\DeclareMathOperator{\opd}{d\!}
\DeclareMathOperator{\re}{Re}

\setlength{\parindent}{0em}

\usepackage[colorinlistoftodos,bordercolor=orange,backgroundcolor=orange!20,linecolor=orange,textsize=scriptsize]{todonotes}

\setlength{\parskip}{0em}

\usepackage{array}
\newcolumntype{M}[1]{>{\centering\arraybackslash}m{#1}}

\newcommand{\rev}{\color{black}}

\begin{document}

\begin{frontmatter}

\vspace{-30pt}

\title{Model reduction with pole-zero placement\\ and {\rev high order moment matching}}

\thanks[footnoteinfo]{The research leading to these results has received funding from the NO Grants 2014 – 2021, under Project ELO-Hyp, contract no. 24/2020 and partially from the grant of the Romanian Ministry of Education and Research, CNCS - UEFISCDI,
project number PN-III-P1-1.1-TE-2019-1614, within PNCDI III. This paper has not been presented at any IFAC meeting. Corresponding author T.~C.~ Ionescu.}

\vspace*{-20pt}
\author[Ionescu,IonescuISMMA]{Tudor C. Ionescu}\ead{tudor.ionescu@upb.ro}, 
\author[Iftime]{Orest V. Iftime}\ead{o.v.iftime@rug.nl}, 
\author[Ionescu,IonescuISMMA]{Ion Necoara}\ead{i.necoara@upb.ro}

\address[Ionescu]{Department of Automatic Control and Systems Engineering, University Politehnica Bucharest, 060042 Bucharest, Romania.}
\address[IonescuISMMA]{Gheorghe Mihoc-Caius Iacob Institute of Mathematical Statistics and Applied Mathematics of the Romanian Academy, 050711 Bucharest, Romania.}
\address[Iftime]{Faculty of Economics and Business, University of Groningen, 9747AE Groningen, The Netherlands.}
\vspace{-10pt}
\begin{keyword}                           
Moment matching, poles-zero {\rev constraints, higher order moment constraints,} data-driven model,  Loewner matrices.
\end{keyword}                             
                                          
\begin{abstract}   
In this paper, we compute a low order approximation of a system of large order $n$ that matches $\nu$ moments of order $j_i$ of the transfer function, at $\nu$ interpolation points, has $\ell$ poles and $k$ zeros fixed and also matches $\nu-(\ell +k)$ {\rev moments of order $j_i+1$, where $j_i+1$ is the multiplicity of the $i$-th interpolation point}. We derive explicit linear systems in the free parameters to simultaneously achieve the required pole-zero placement and match the desired {\rev high order moments}.  We compute the closed form of the free parameters that meet the constraints, {\rev as the solution of a $\nu$ order linear system}. Furthermore, for data-driven model reduction, we generalize the {\rev construction of the Loewner matrices to include the data and the imposed pole and higher order moment} constraints. The resulting approximations achieve a trade-off between the good norm approximation and the preservation of the dynamics of the original system in a region of interest. 
\end{abstract}

\end{frontmatter}

\section{Introduction}
\label{intro}

Since the mathematical models of physical and industrial plants are highly dimensional {\rev linear, time-invariant (LTI) systems}, model reduction is  called for to find low-order approximations that meet desired constraints. Moment matching-based approximation techniques stand out as computationally efficient and easy to implement  \cite{antoulas-2005}. Based on a time-domain and Sylvester equation approach to moment matching taken in \cite{astolfi-TAC2010,i-astolfi-colaneri-SCL2014}, the  notion of moment is related to the unique solution of a Sylvester equation, see also \cite{gallivan-vandendorpe-vandooren-JCAM2004,gallivan-vandendorpe-vandooren-MTNS2006}.  Families of $\nu$ order models parametrized in $\nu$ free parameters, that match a set of $\nu$ moments of order $j_i$ of a given $n$-th order system, at a set of $\nu$ interpolation points of multiplicity $j_i+1$, are computed. 


\textbf{Motivation and contributions.} Fixing \emph{all} the $\nu$ degrees of freedom provides the unique $\nu$ order model that meets a \emph{single} required constraint. For instance, in \cite{astolfi-TAC2010} the $\nu$ free parameters are selected such that stability or the relative degree are preserved. 
{\rev In \cite{i-astolfi-colaneri-SCL2014} all the $\nu$ degrees of freedom are used to compute the \emph{unique} reduced model of \emph{minimal order} $\nu/2$ that matches $\nu$ moments of the given system. Furthermore, \cite{ionescu-iftime-ACC2012} addresses the computation of families of stable, LTI $\nu$ order models for infinite-dimensional systems, using state-feedback stabilization arguments without a closed form of the $\nu$ degrees of freedom.
}
In \cite{anic-beattie-gugercin-antoulas-AUT2013,gugercin-antoulas-beattie-SIAM2008} matching $\nu$ zero and first order moments of the system at the mirror images of the $\nu$ poles of the approximant yields the model with the lowest approximation error $H_2$-norm. In \cite{i-TAC2016}  the model that matches $2\nu$ moments as well as the model that matches $\nu$ moments of the given system and $\nu$ moments of its first order derivative are computed. Recently, in \cite{NecIon:18,NecIon:19}, using  optimization algorithms, the model achieving the minimal $H_2$-norm of the approximation error has been found. %
{\rev Furthermore, in \cite{ibrir-IEEE2017}, optimization-based methods are used for minimizing  a mixed $H_2/H_\infty$ ``small-gain'' criterion yielding a local minimizer.}
%
%
{\rev However, all the aforementioned techniques \emph{inherently place} the the poles and/or zeros of the reduced order models obtained at arbitrary locations in the complex plane, e.g., close to the imaginary axis, losing practical desired behaviours. The methods either focus on the placement of poles such that constraints of stability are met or such that the approximation error is minimized. We motivate the work of this paper with the trade-off between constraints of stability, practical significance and approximation accuracy. We place a number of poles at prescribed locations and match a number of high(er) order moments for decreasing the approximation error. We extend the arguments in \cite{datta-chakraborty-chaudhuriTAC2012}, where the state-feedback controller is designed to place some poles of the plant while the rest are constrained at the given locations.} However, in model reduction, an approximation satisfying \emph{multiple} properties, such as fixing stable poles and zeros and matching  moments is a yet unsolved problem.

In this paper, we seek a $\nu$ order approximation that simultaneously satisfies \emph{multiple} properties, i.e., matches $\nu$ moments of order $j_i$ at $\nu$ interpolation points of multiplicity $j_i+1$ and has $\ell$ poles, $k$ zeros and matches $\nu-(\ell+k)$ moments of order $j_i+1$, $i=1:l$, such that $\sum_i j_i=\nu$. 
We provide a linear system that yields the sufficient condition on the free parameters to place $\ell\leq\nu$ poles. For a particular canonical form of the interpolation points, we write the necessary and sufficient condition on the free parameters for the pole placement. 
%
%
We also derive the explicit linear system necessary and sufficient to place $k\leq\nu$ zeros. Moreover, we write the explicit linear system such that $\nu-(\ell+k)$ moments of order $j_i+1$ are matched. Then, in the framework of data-driven model reduction, we solve the problem of finding a reduced order model that matches the given data and satisfies the pole and moment constraints simultaneously.  Generalizing the Loewner matrices presented in \cite{gosea-zhang-antoulas2020,mayo-antoulas-LAA2007} for model reduction and in, e.g., \cite{Kergus-Formentin-PoussotVassal-Demourant-ECC2018} for control, we compute the closed form of the free parameters that meet {\emph all} the constraints. 
The resulting reduced order models achieve a trade-off between the good norm approximation and the preservation of the given dynamics in a region of interest. 


\textbf{Content.} In Section \ref{sect_prel} we recall the time-domain moment matching for linear systems. In Section \ref{sect_pzd_place}, we give and solve the sets of linear constraints to place poles, zeros and match further moments, respectively. In Section \ref{sect_Loewner_PZD}, we include all the constraints in the Loewner matrices to compute the explicit formula of the parameters satisfying the constraints. In Section \ref{sect_expl} we illustrate the theory on {\rev a cart controlled by a double pendulum and on} a CD player.


\textbf{Notation.} $\mathbb{R}$ is the set of real numbers and $\mathbb{C}$ is the set of complex numbers. $\mathbb{C}^{-}$ denotes the set of complex open left half plane. If $A$ is a matrix, then $A^{T}$ is the transpose. $\sigma(A)$ is the spectrum of  $A$. 


\section{Preliminaries}
\label{sect_prel}
\noindent Consider a single input-single output (SISO) linear time-invariant  (LTI) minimal  system
\begin{equation}\label{system}
\begin{split}
&\Sigma: \;\; \dot x = Ax+Bu, \quad y=Cx,
\end{split}
\end{equation}


with the state $x\in\mathbb{R}^n$, the input $u\in\mathbb{R}$ and the output $y\in\mathbb R$. The transfer function of \eqref{system} is
\begin{equation}\label{tf}
K(s)=C(sI-A)^{-1}B,\quad K:\mathbb{C} \to \mathbb {C}.
\end{equation}


Throughout the rest of the paper we assume that the system \eqref{system} is stable, that is $\sigma(A) \subset \mathbb{C}^-$, controllable and observable. 
{\rev Furthermore, $\sigma(A)$ is assumed a symmetric set of complex numbers, if for all $\lambda\in\sigma(A)$ then $\bar{\lambda}\in\sigma(A)$.}
\\
Then, the moments of (\ref{tf}) are defined as follows.
\begin{deff}\label{def_moment}\n\cite{astolfi-TAC2010}
{\rev The 0-moment of $K(s)$ at $s_1\in\mathbb C\setminus \sigma(A)$ is $\eta_0(s_1)=K(s_1)\in\mathbb C$.}
The $j_i$-moment
of $K$ from \eqref{tf} at $s_{i}\in\mathbb C\setminus\sigma(A)$ with multiplicity $j_i+1$, $i=1:l$, is defined by
$\eta_{j_i}(s_{i})={(-1)^{j_i}}/{(j_i!)}\left[{\opd^{j_i}K(s)}/{\opd s^{j_i}}\right]_{s=s_i}\in\mathbb C.$
\end{deff}


Let $s_i\in\mathbb C\setminus\sigma(A)$, $i=1:l$,  be a symmetric set of complex numbers. Take $j_i\geq 0$
such that
$\sum_{{\rev i=0}}^l j_i=\nu.
$
For each $i$, let $\eta_0(s_i),...,\eta_{j_i}(s_i)$ denote the $\nu$ moments of order $j_i+1$ of \eqref{system} at the given points $s_i$. 
Let $S\in\mathbb{C}^{\nu\times\nu}$, with the symmetric spectrum $\sigma(S)=\{s_i\ |\ i=1:l\}$, be such that $\sigma(S)\cap\sigma(A)=\emptyset$. Let $L\in\mathbb{C}^{1\times\nu}$ be such that the pair $(L,S)$ is observable. %
%
%
Let $\Pi\in\mathbb{R}^{n\times\nu}$ be the solution of the Sylvester equation


\begin{equation}\label{eq_Sylvester_Pi}
A\Pi+BL  =  \Pi S.
\end{equation}


Furthermore, since the system is minimal, assuming that $\sigma(A)\cap\sigma(S)=\emptyset$, then
$\Pi$ is the unique solution of the equation (\ref{eq_Sylvester_Pi})
and ${\rm rank}\ \Pi=\nu$, see e.g. \cite{desouza-bhattacharyya-LAA1981}.
Then, the moments of (\ref{system}) are characterized as follows.

\begin{prop}
\label{prop_mom_time}
\cite{astolfi-TAC2010}
\label{def_PI}
The $\nu$ moments $\eta_0(s_i),...,\eta_{j_i}(s_i), i=1:l$,  of system (\ref{system}) at $\sigma(S)$ are in one-to-one relation with the elements of the matrix $C\Pi$, i.e. the moments are uniquely determined by the elements of this matrix.
\end{prop}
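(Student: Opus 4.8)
The plan is to show the standard equivalence between the moments $\eta_0(s_i),\dots,\eta_{j_i}(s_i)$, $i=1:l$, and the entries of $C\Pi$, where $\Pi$ solves the Sylvester equation \eqref{eq_Sylvester_Pi}. First I would reduce to a convenient coordinate system for the pair $(S,L)$: since $(L,S)$ is observable and $\sigma(S)$ is the symmetric set $\{s_i\}$ with multiplicities $j_i+1$, one can pick $S$ in (complex) Jordan form with one $(j_i+1)\times(j_i+1)$ block per interpolation point $s_i$ and $L$ the corresponding row picking out the leading entries of each block; any other admissible $(L,S)$ is related to this one by a (nonsingular) state-coordinate change $T$, under which $\Pi \mapsto \Pi T$ and $C\Pi\mapsto (C\Pi)T$, so the span/entries of $C\Pi$ carry the same information. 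So it suffices to prove the claim for the Jordan choice.

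With $S$ block-diagonal, $\Pi$ decomposes column-block-wise as $\Pi = [\Pi_1\ \cdots\ \Pi_l]$, and the Sylvester equation splits into $l$ independent equations $A\Pi_i + B L_i = \Pi_i S_i$, one per Jordan block $S_i = s_i I + N$ (with $N$ the nilpotent shift). Reading this equation column by column within the $i$-th block gives a recursion: the first column $\pi_{i,0}$ satisfies $(s_iI-A)\pi_{i,0}=B$, hence $\pi_{i,0}=(s_iI-A)^{-1}B$ (using $\sigma(S)\cap\sigma(A)=\emptyset$), and each subsequent column satisfies $(s_iI-A)\pi_{i,r} = \pi_{i,r-1}$, so $\pi_{i,r} = (s_iI-A)^{-(r+1)}B$. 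Multiplying on the left by $C$ and comparing with the derivative formula $\tfrac{d^r}{ds^r}(sI-A)^{-1} = (-1)^r r!\,(sI-A)^{-(r+1)}$, one gets $C\pi_{i,r} = (-1)^r r!\,C(s_iI-A)^{-(r+1)}B = r!\,(-1)^r \big[\tfrac{d^r K}{ds^r}\big]_{s=s_i}\cdot\tfrac{1}{r!}$... i.e. exactly $C\pi_{i,r} = \eta_r(s_i)$ as in Definition~\ref{def_moment}. This gives an explicit bijection between the $\nu$ entries of $C\Pi$ (in the Jordan coordinates) and the $\nu$ moments, establishing the one-to-one correspondence; the general-coordinates statement then follows from the invertible change of basis noted above, and uniqueness of $\Pi$ (rank $\nu$) guaranteed in the text ensures $C\Pi$ is well defined.

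The main obstacle, such as it is, is bookkeeping rather than anything deep: one must be careful that the Jordan/observability normal form for $(L,S)$ is available (this is where observability of $(L,S)$ and minimality are used — minimality of $\Sigma$ ensures $C(s_iI-A)^{-(r+1)}B$ genuinely encodes the moment and $\mathrm{rank}\,\Pi=\nu$), and that the factor $(-1)^{j_i}/j_i!$ in Definition~\ref{def_moment} matches the combinatorial factor coming out of differentiating the resolvent, so that the correspondence is a clean invertible (indeed, up to the coordinate change, essentially identity) map and not merely a surjection. I would also remark that since everything is linear in the columns of $\Pi$, ``one-to-one relation'' is to be understood as: the map from the tuple of moments to $C\Pi$ is well-defined and injective, so knowing $C\Pi$ is equivalent to knowing all $\nu$ moments — which is precisely the statement of Proposition~\ref{prop_mom_time}.
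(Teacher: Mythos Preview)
The paper does not give its own proof of this proposition; it is quoted as a known result from \cite{astolfi-TAC2010}. Your argument is correct and is exactly the classical derivation behind that reference: pass to Jordan coordinates for $(L,S)$, read the columns of $\Pi$ from the Sylvester equation as successive resolvent powers $(s_iI-A)^{-(r+1)}B$, and identify $C\pi_{i,r}=\eta_r(s_i)$ via $\tfrac{d^r}{ds^r}(sI-A)^{-1}=(-1)^r r!\,(sI-A)^{-(r+1)}$. One small sharpening: after the coordinate change $T$ you obtain $C\Pi T$, so for a generic observable pair $(L,S)$ the entries of $C\Pi$ are fixed invertible linear combinations of the moments rather than the moments themselves; this is precisely what ``one-to-one relation'' is meant to convey, and you handle it correctly.
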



Consider the LTI system
$
\dot{\xi}=F\xi+Gu,\ \psi=H\xi,
$
with $F\in\mathbb{C}^{\nu\times\nu},\ G\in\mathbb{C}^{\nu}$ and $ H\in\mathbb{C}^{p\times\nu}$, and the corresponding  transfer function
$
K_G(s)=H(sI-F)^{-1}G.
$
Let $\hat\eta_0(s_i),...,\hat\eta_{j_i}(s_i)$ denote the first $j_i+1$ moments of $K_G$ at $s_i$. Then, moment matching is defined as follows. 
\begin{deff}\label{deff_MM}\n\cite{iftime-i-ECC2013}
 $K_G$ matches $\nu$ moments of  $K$ at $\{s_1,...,s_l\}$, if
$
\eta_{\kappa}(s_i)=\hat\eta_{\kappa}(s_i),
$
for all $\kappa=0:j_i$, $i=1:l$.
\end{deff}
The next result gives the necessary and sufficient conditions
for a low-order system to achieve moment matching.

\begin{prop}
\cite{astolfi-TAC2010}
\label{prop_FGL}
Fix $S\in\mathbb{C}^{\nu\times\nu}$ and $L\in\mathbb{C}^{1\times\nu}$,
such that the pair $(L,S)$ is observable and $\sigma(S)\cap\sigma(A)=\emptyset$.
Furthermore, assume that $\sigma(F)\cap\sigma(S)=\emptyset$. Then, the reduced system $K_G$ matches the moments of (\ref{system}) at $\sigma(S)$ if and only if
$
HP=C\Pi,
$
where the matrix $P\in\mathbb{C}^{\nu\times\nu}$ is the
unique solution of the Sylvester equation $FP+GL=PS.$
\end{prop}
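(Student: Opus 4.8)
The plan is to reduce the moment-matching condition of Definition \ref{deff_MM} to the single matrix identity $HP = C\Pi$ by passing through the Sylvester-equation characterization of moments established in Proposition \ref{prop_mom_time}. First I would note that the hypotheses $\sigma(S)\cap\sigma(A)=\emptyset$ and $\sigma(F)\cap\sigma(S)=\emptyset$ guarantee, via the standard result on Sylvester equations with disjoint spectra (cited here as \cite{desouza-bhattacharyya-LAA1981}), that both $\Pi$ solving $A\Pi+BL=\Pi S$ and $P$ solving $FP+GL=PS$ exist and are unique; moreover, since $(L,S)$ is observable and $(A,B)$ is controllable, $\Pi$ has full rank $\nu$, and analogously $P$ is nonsingular because $(L,S)$ observable and $(F,G)$ reachable (the latter may be taken without loss of generality for a minimal realization of $K_G$). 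I would then invoke Proposition \ref{prop_mom_time} twice: the $\nu$ moments $\eta_0(s_i),\dots,\eta_{j_i}(s_i)$ of $K$ at $\sigma(S)$ are in one-to-one correspondence with the entries of $C\Pi$, and the $\nu$ moments $\hat\eta_0(s_i),\dots,\hat\eta_{j_i}(s_i)$ of $K_G$ at $\sigma(S)$ are in one-to-one correspondence with the entries of $HP$.

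With this setup, the argument is essentially a two-way implication. For the ``if'' direction, suppose $HP=C\Pi$. Since $C\Pi$ encodes the moments of $K$ and $HP$ encodes the moments of $K_G$ at the same $\sigma(S)$, and the encoding map (built from $L$, $S$ and the choice of coordinates that turns $\Pi$, $P$ into the appropriate generalized controllability/observability-type Krylov data) is the \emph{same} invertible map in both cases, equality of $HP$ and $C\Pi$ forces equality of the corresponding moment tuples, i.e. $\eta_\kappa(s_i)=\hat\eta_\kappa(s_i)$ for all $\kappa=0:j_i$, $i=1:l$, which is precisely Definition \ref{deff_MM}. For the ``only if'' direction, suppose $K_G$ matches the moments of $K$ at $\sigma(S)$; then the moment tuples coincide, hence their images under the common invertible encoding coincide, which gives $HP=C\Pi$. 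The key point making both directions work is that the correspondence of Proposition \ref{prop_mom_time} is not merely a bijection of sets but is realized by a fixed linear change of basis depending only on $(L,S)$ — so it applies verbatim to any realization, in particular to $(F,G,H)$.

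The main obstacle I expect is making the phrase ``in one-to-one relation with the elements of $C\Pi$'' fully rigorous and, crucially, \emph{uniform across the two realizations}: one must verify that the same linear map sends the ordered list of moments $(\eta_0(s_i),\dots,\eta_{j_i}(s_i))_i$ to $C\Pi$ and the list $(\hat\eta_0(s_i),\dots,\hat\eta_{j_i}(s_i))_i$ to $HP$. Concretely, one would choose $S$ in a real Jordan / companion-type form adapted to the multiplicities $j_i+1$ and $L$ correspondingly, write $\Pi = \sum_{k\ge 0}(-A)^{-k-1}B\,(\text{blocks of }L,S)$ as a convergent finite combination on each Jordan block (using $\sigma(S)\cap\sigma(A)=\emptyset$), and recognize the block columns of $C\Pi$ as exactly the derivatives $\left[\opd^{\kappa}K(s)/\opd s^{\kappa}\right]_{s=s_i}$ up to the constants $(-1)^\kappa/\kappa!$ appearing in Definition \ref{def_moment}; the identical computation with $(F,G,H)$ in place of $(A,B,C)$ yields $HP$ in terms of the derivatives of $K_G$. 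Once this block-by-block identification is in place, the equivalence $HP=C\Pi \iff \eta_\kappa(s_i)=\hat\eta_\kappa(s_i)\ \forall \kappa,i$ is immediate, and the proposition follows.
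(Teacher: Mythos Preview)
The paper does not supply its own proof of Proposition~\ref{prop_FGL}: the result is quoted from \cite{astolfi-TAC2010} and stated without argument, so there is nothing in the paper to compare your attempt against directly.

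That said, your plan is sound and is essentially the standard route taken in \cite{astolfi-TAC2010}. The only point worth tightening is the appeal to controllability of $(F,G)$: you invoke it to get $P$ invertible, but the statement of the proposition does not assume minimality of the reduced realization, and invertibility of $P$ is not needed for the equivalence $HP=C\Pi \iff$ moment matching (the paper only remarks on invertibility of $P$ \emph{after} the proposition, as a separate observation). Your core argument---apply Proposition~\ref{prop_mom_time} verbatim to $(A,B,C)$ and to $(F,G,H)$ with the \emph{same} $(L,S)$, observe that the map from the moment tuple to the row vector $C\Pi$ (resp.\ $HP$) depends only on $(L,S)$ and is a fixed invertible linear change of coordinates, hence equality of the row vectors is equivalent to equality of the moment tuples---goes through without it.
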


\noindent Note that, since the pair $(L,S)$ is observable, $P$ is invertible and $P^{-1}FP=S-{\rev P^{-1}}GL$. We are now ready to present the family of $\nu$ order models parameterized in $G$ that match $\nu$ moments of \eqref{system}. The system
\begin{equation}\label{redmod_CPi}
\Sigma_{G}: \,\dot{\xi}=(S-GL)\xi+Gu,\quad \psi=C\Pi\xi,
\end{equation}
with the transfer function
\begin{equation}\label{tf_redmod_CPi}
K_G(s)=C\Pi(sI-S+GL)^{-1}G,
\end{equation}
describes the family of $\nu$ order models that match $\nu$ moments at $\sigma(S)$ of \eqref{system}, in the sense of Definition \ref{deff_MM}, for all $G\in\mathbb R^{\nu}$ such that $\sigma(S-GL) \cap \sigma(S)=\emptyset$.

\begin{prob}\label{prob_all_constraints}\n
Consider \eqref{system} and the family of approximations $\Sigma_G$ as in \eqref{redmod_CPi}, matching $\nu$ moments of order $0:j_i$ of \eqref{system} at $s_i$, $i=1:l$, with multiplicity $j_i$. Find the parameter matrix $G$ such that
\begin{enumerate}[label=\roman*)]
\item $\Sigma_G$ has $\ell$ poles at $\lambda_1,\dots,\lambda_\ell$,
\item $\Sigma_G$ has $k$ zeros at $z_1,\dots,z_k$,
\item $\nu-(k+\ell)$ moments of order $1:j_i+1$ of $K_G$ and $K$ at $s_i$ match.
\end{enumerate}
\end{prob}


\section{Model reduction with pole-zero placement and  matching of {\rev high order moments}}\label{sect_pzd_place}

In this section we derive the linear constraints \eqref{eq_G_pole} and \eqref{eq_G_constraint_explicit}, \eqref{eq_G_zero_explicit} and \eqref{eq_Gd}, parametrized in $G\in\mathbb C^\nu$, resulting in the linear system \eqref{eq_constraints_all} to provide the solution to Problem \ref{prob_all_constraints}. 

\subsection{Pole placement as linear constraints}
\label{sect_p_place}

In this section, we place $\ell$ poles of the reduced order, for example in some of the poles of the original system, by properly selecting $G$.
Consider  \eqref{system} and the class of reduced $\nu$ order models $\Sigma_G$ from \eqref{redmod_CPi} that match $\nu$ moments of \eqref{system} at $\sigma(S)$. Let $\lambda_i\in\mathbb{C}$, $i=1:\ell$, $\ell\leq \nu$ be such that $\lambda_i\notin \sigma(S)$. Then $\lambda_i$ are poles of $\Sigma_G$ if $\det (\lambda_iI-S+GL)=0,\ i=1:\ell$ and such that $\{\lambda_1,\dots,\lambda_\ell\}$ is a symmetric set. 
To this end, let $\Qp\in\mathbb C^{\ell\times\ell}$ be a matrix such that $\sigma(\Qp)=\{\lambda_1,\dots,\lambda_\ell\}$. Furthermore, consider $\Cp \in\mathbb C^{1\times n}$ such that $\Cp \Pi=0$, where $\Pi$ solves \eqref{eq_Sylvester_Pi}, and let $\Up\in \mathbb R^{\ell\times n}$ be the unique solution of the Sylvester equation
\begin{equation}\label{eq_Sylvester_Y}
\Qp\Up=\Up A+\Rp \Cp,
\end{equation}
with $\Rp\in\mathbb R^\ell$ any matrix such that the pair $(\Qp,\Rp)$ is controllable%
%
%
Hence $\rank\Up=\ell$, see. e.g., \cite{desouza-bhattacharyya-LAA1981}. The next result imposes linear constraints on  $G$ such that  the reduced model $\Sigma_G$ has $\ell$ poles at $\{\lambda_1,\dots,\lambda_\ell\}$.

\begin{thm}\label{thm_G_pole}
Let $\Sigma_G$ as in \eqref{redmod_CPi} be a $\nu$ order model that matches the moments of \eqref{system} at $\sigma(S)$. Furthermore, let $\Up\in \mathbb R^{\ell\times n}$  be the unique solution of \eqref{eq_Sylvester_Y} and assume that $\rank (\Up\Pi)=\ell$. 
{\rev Consider $\Cp \in\mathbb C^{1\times n}$ such that $\Cp \Pi=0$ (i.e., $C_p^T\in\ker \Pi^T$).}
If $G$ is a solution of the equation
\begin{equation}\label{eq_G_pole}
\Up\Pi G=\Up B,
\end{equation}
then $\sigma(\Qp)=\{\lambda_1,\dots,\lambda_\ell\}\subseteq\sigma(S-GL)$. 

\end{thm}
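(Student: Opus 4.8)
The plan is to show that each $\lambda_i$ is an eigenvalue of $S-GL$ by exhibiting a nonzero vector $v_i$ with $(S-GL)v_i = \lambda_i v_i$, constructed from the solution $\Up$ of the Sylvester equation \eqref{eq_Sylvester_Y}. The natural candidate comes from the columns (or rather a basis of the left eigenstructure transported through $\Pi$): since $\rank(\Up\Pi)=\ell$, the matrix $\Up\Pi\in\mathbb C^{\ell\times\nu}$ has full row rank, so it is sensible to work on the ``$\ell$-dimensional'' side cut out by it. Concretely, I would first left-multiply the reduced-model pencil: for any eigen-data I want to relate $\Up\Pi(S-GL)$ to $\Qp\Up\Pi$. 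Using \eqref{eq_G_pole}, $\Up\Pi G = \Up B$, so
\[
\Up\Pi(S-GL) = \Up\Pi S - \Up\Pi G L = \Up\Pi S - \Up B L.
\]
Now invoke the Sylvester equation \eqref{eq_Sylvester_Pi} for $\Pi$, namely $A\Pi + BL = \Pi S$, i.e. $BL = \Pi S - A\Pi$; substituting gives
\[
\Up\Pi(S-GL) = \Up\Pi S - \Up(\Pi S - A\Pi) = \Up A\Pi.
\]
So the key identity is $\Up\Pi(S-GL) = \Up A\Pi$.

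Next I would bring in \eqref{eq_Sylvester_Y}, $\Qp\Up = \Up A + \Rp\Cp$, together with the hypothesis $\Cp\Pi = 0$. Right-multiplying \eqref{eq_Sylvester_Y} by $\Pi$ yields $\Qp\Up\Pi = \Up A\Pi + \Rp\Cp\Pi = \Up A\Pi$. Combining with the previous display,
\[
\Up\Pi(S-GL) = \Qp\,\Up\Pi.
\]
This is an intertwining relation: $M := \Up\Pi\in\mathbb C^{\ell\times\nu}$ satisfies $M(S-GL) = \Qp M$ with $M$ of full row rank $\ell$. From here the conclusion $\sigma(\Qp)\subseteq\sigma(S-GL)$ is standard: if $w^T\Qp = \mu w^T$ is a left eigenvector of $\Qp$ (every $\mu\in\sigma(\Qp)$ has one), then $w^T M (S-GL) = w^T\Qp M = \mu\, w^T M$, and $w^T M\neq 0$ because $M$ has full row rank, so $\mu$ is a left eigenvalue, hence an eigenvalue, of $S-GL$. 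Since $\sigma(\Qp)=\{\lambda_1,\dots,\lambda_\ell\}$, this gives exactly $\{\lambda_1,\dots,\lambda_\ell\}\subseteq\sigma(S-GL)$. (If one prefers to track multiplicities, one instead notes $M$ provides a change of variables embedding the $\Qp$-dynamics into the $(S-GL)$-dynamics, and the characteristic polynomial of $\Qp$ divides that of $S-GL$.)

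The main obstacle — really the only subtle point — is making sure the full-rank hypothesis $\rank(\Up\Pi)=\ell$ is used correctly so that $w^TM\neq 0$; without it the intertwining relation is vacuous. I would also remark on the existence side: $\Up$ exists and is unique with $\rank\Up=\ell$ because $\sigma(\Qp)\cap\sigma(A)=\emptyset$ (implicit since the $\lambda_i\notin\sigma(S)$ are to be placed as new poles, and one assumes them distinct from $\sigma(A)$ as well for \eqref{eq_Sylvester_Y} to be uniquely solvable) and $(\Qp,\Rp)$ is controllable, by the standard result of \cite{desouza-bhattacharyya-LAA1981}; and equation \eqref{eq_G_pole} is a genuine linear system in $G\in\mathbb C^\nu$ with coefficient matrix $\Up\Pi$ of full row rank $\ell\le\nu$, hence always consistent. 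Everything else is the short chain of substitutions above, with no case analysis required.
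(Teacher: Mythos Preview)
Your argument is correct and is essentially the same as the paper's: both use the Sylvester equations \eqref{eq_Sylvester_Pi}, \eqref{eq_Sylvester_Y}, the hypothesis $\Cp\Pi=0$, and \eqref{eq_G_pole} to obtain the intertwining relation $\Up\Pi(S-GL)=\Qp\,\Up\Pi$, then apply a left-eigenvector argument together with $\rank(\Up\Pi)=\ell$ to conclude $\sigma(\Qp)\subseteq\sigma(S-GL)$. The only cosmetic difference is that you derive the full matrix identity first and then specialize to an eigenvector, while the paper threads a fixed left eigenvector $v^T$ through each substitution step; the content is identical.
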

\vspace*{-10pt}
\begin{proof}
Let $\lambda\in\sigma(\Qp)$. Then there exists the (left) eigenvector $v\in\mathbb R^\nu$, $v\ne 0$, such that $v^T(\lambda I-\Qp)=0$. Post multiplying with $\Up\Pi$ yields 
$v^T(\lambda\Up\Pi-\Qp\Up\Pi)=0.$
Hence, by \eqref{eq_Sylvester_Y}, we write
$
v^T(\lambda\Up\Pi-\Up A\Pi-\Rp\Cp\Pi)=0.
$
Since assuming $\Cp\Pi=0$ leads to
$v^T(\lambda\Up\Pi-\Up A\Pi)=0,$
using \eqref{eq_Sylvester_Pi} further yields
$
v^T(\lambda \Up\Pi - \Up\Pi S+\Up BL)=0.
$
Assuming \eqref{eq_G_pole} holds, we get 
$
v^T\Up\Pi(\lambda I-S+GL)=0.
$
Since we assume that $\rank(\Up\Pi)=\ell$, then $(\Up\Pi)^Tv=0$ if and only if $v=0$. Hence, $\lambda\in\sigma (S-GL)$ with the (left) eigenvector $(\Up\Pi)^T v$ and the claim follows.\fin
\end{proof}
\vspace*{-10pt}
\begin{obs}\n
Theorem \ref{thm_G_pole} yields the sufficient condition \eqref{eq_G_pole} on $G$ such that $\ell\leq\nu$ of the poles of $\Sigma_G$ are fixed, when $S,L$ and $\Qp$ are arbitrary matrices such that the pair $(L,S)$ is observable and the pair $(\Qp,\Rp)$ is controllable. Furthermore, if $\ell=\nu$ and $\Up\Pi$ is assumed invertible, then $\sigma(S-GL)=\sigma(\Qp)$, if and only if 
$
G=(\Up\Pi)^{-1}\Up B.
$
Note that $\Up$ and $\Pi$ can be easily computed explicitly using, e.g., Krylov projections and a coordinate transformation, to avoid solving any Sylvester equation. Furthermore, a sufficient condition to satisfy \eqref{eq_G_pole} is to select $G$ as a solution of the matrix equation $\Pi G=B$. Hence, post-multiplying equation \eqref{eq_Sylvester_Y} with $\Pi$ yields $\Qp\Up\Pi=\Up A\Pi$. Using equation \eqref{eq_Sylvester_Pi} one immediately gets $\Up A\Pi=\Up\Pi (S-GL)$. Moreover, if $\Up \Pi$ is assumed invertible, then the $\nu$ order model $\Sigma_G$ with $G$ such that $\Pi G=B$ is written equivalently as 
$(\Up\Pi)^{-1}\Up A\Pi = S-GL,\quad G=(\Up\Pi)^{-1}\Up B.$
\end{obs}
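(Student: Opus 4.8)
The first assertion of the observation is nothing more than a restatement of Theorem~\ref{thm_G_pole}, and the remark about obtaining $\Up$ and $\Pi$ via Krylov projections is a numerical aside; neither needs an argument. The genuine content is the derivation chain, the sufficiency of $\Pi G=B$, the equivalent description of $\Sigma_G$, and the ``if and only if'' characterisation for $\ell=\nu$. I would organise the proof around one algebraic identity and then split the equivalence into its two directions.

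The plan is to first record the identity underlying everything. Post-multiplying \eqref{eq_Sylvester_Y} by $\Pi$ gives $\Qp\Up\Pi=\Up A\Pi+\Rp\Cp\Pi$, and since $\Cp\Pi=0$ the last term drops, leaving $\Qp\Up\Pi=\Up A\Pi$. Substituting $A\Pi=\Pi S-BL$ from \eqref{eq_Sylvester_Pi} then yields $\Up A\Pi=\Up\Pi S-\Up BL$; these two routine computations are precisely the derivation chain claimed. Sufficiency of $\Pi G=B$ for \eqref{eq_G_pole} is then immediate: left-multiplying $\Pi G=B$ by $\Up$ gives $\Up\Pi G=\Up B$, which is \eqref{eq_G_pole}. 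Feeding $\Up B=\Up\Pi G$ into the identity turns it into $\Qp\Up\Pi=\Up\Pi S-\Up\Pi GL=\Up\Pi(S-GL)$, and when $\Up\Pi$ is invertible this rearranges to $(\Up\Pi)^{-1}\Up A\Pi=S-GL$ together with $G=(\Up\Pi)^{-1}\Up B$, i.e. the claimed equivalent form of $\Sigma_G$.

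For the equivalence with $\ell=\nu$ and $\Up\Pi$ invertible I would treat the two implications separately. The ``if'' direction is short: if $G=(\Up\Pi)^{-1}\Up B$ then $\Up\Pi G=\Up B$, so \eqref{eq_G_pole} holds and the identity $\Qp\Up\Pi=\Up\Pi(S-GL)$ of the previous step applies. Since $\Up\Pi$ is now square and invertible, this reads $\Qp=(\Up\Pi)(S-GL)(\Up\Pi)^{-1}$, exhibiting $\Qp$ and $S-GL$ as similar; hence $\sigma(S-GL)=\sigma(\Qp)$ with multiplicities, which is stronger than the set inclusion supplied by Theorem~\ref{thm_G_pole}.

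The ``only if'' direction carries the real weight, and I would close it by a uniqueness argument for the assigning gain. As $\ell=\nu$, both $S-GL$ and $\Qp$ are $\nu\times\nu$, so $\sigma(S-GL)=\sigma(\Qp)$, read as equality of spectra counted with multiplicity, forces the two monic degree-$\nu$ characteristic polynomials to coincide. Because $(L,S)$ is observable and $L\in\mathbb{C}^{1\times\nu}$, assigning the characteristic polynomial of $S-GL$ through the rank-one injection term $GL$ is the SISO eigenvalue-assignment problem, for which the gain realising a prescribed characteristic polynomial is unique; this is the dual of Ackermann's formula, or is seen by passing $(L,S)$ to observable canonical form, in which $GL$ alters exactly the characteristic-polynomial coefficients bijectively. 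Having already shown in the ``if'' direction that $G=(\Up\Pi)^{-1}\Up B$ does realise $\sigma(\Qp)$, uniqueness forces any $G$ with $\sigma(S-GL)=\sigma(\Qp)$ to equal $(\Up\Pi)^{-1}\Up B$, completing the equivalence. I expect this uniqueness step to be the main obstacle: the inclusion from Theorem~\ref{thm_G_pole} and the similarity argument deliver only one implication, and without the observability of $(L,S)$ several distinct rank-one corrections $GL$ could produce the same spectrum, so the hypothesis on $(L,S)$ must be used essentially here.
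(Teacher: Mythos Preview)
Your proof is correct and follows the same derivation chain the paper embeds in the observation itself: post-multiply \eqref{eq_Sylvester_Y} by $\Pi$, use $\Cp\Pi=0$, substitute \eqref{eq_Sylvester_Pi}, and read off the similarity $\Qp\Up\Pi=\Up\Pi(S-GL)$ once \eqref{eq_G_pole} holds. The paper treats this as a remark and gives no separate proof; in particular it simply asserts the ``if and only if'' for $\ell=\nu$ without justifying the ``only if'' direction. Your closure of that direction via uniqueness of the assigning gain in SISO pole placement (using observability of $(L,S)$) is the right idea and supplies exactly the argument the paper omits, so here you are actually more complete than the source.
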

%


When $S$ is chosen diagonal   and the zero-order moments are considered,  \eqref{eq_G_pole} can be replaced by  a linear  system in the unknown $G\in\mathbb{R}^\nu$ without solving Sylvester equations. The next result gives a necessary and sufficient condition to place $\ell\leq\nu$ poles.
\begin{prop}
\label{prop_G_pole_explicit} Let $S=\diag\{s_1,\dots,s_\nu\}$ and $L=[1\ \dots\ 1]\in\mathbb R^{1\times\nu}$. Then  $\{\lambda_1,\dots,\lambda_\ell\}$ are poles of $K_G(s)$ as in \eqref{tf_redmod_CPi} if and only if $G\in\mathbb{C}^\nu$ is the solution of the linear system
\begin{equation}\label{eq_G_constraint_explicit}
1+LD_k^{-1}G=0,\quad \forall k=1:\ell,
\end{equation}
with $D_k=\diag(\theta_{k1},\dots,\theta_{k\nu})$, where $\theta_{ki}=\lambda_k-s_i,\ i=1:\nu$ and  $k=1:\ell.$
\end{prop}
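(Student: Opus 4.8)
The plan is to reduce the pole condition to the matrix determinant lemma. By the characterization recalled just before Theorem \ref{thm_G_pole}, $\lambda_k$ is a pole of $K_G(s)$ precisely when $\det(\lambda_k I - S + GL) = 0$. First I would observe that, with $S = \diag\{s_1,\dots,s_\nu\}$, we have $\lambda_k I - S = D_k = \diag(\theta_{k1},\dots,\theta_{k\nu})$ where $\theta_{ki} = \lambda_k - s_i$, and that $D_k$ is invertible because $\lambda_k \notin \sigma(S)$ by hypothesis, so none of the $\theta_{ki}$ vanish; in particular $\det D_k = \prod_{i=1}^\nu \theta_{ki} \neq 0$.

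Next I would factor $\lambda_k I - S + GL = D_k\big(I + D_k^{-1} G L\big)$ and use multiplicativity of the determinant, giving $\det(\lambda_k I - S + GL) = \det(D_k)\,\det(I + D_k^{-1} G L)$. Since $G \in \mathbb{C}^\nu$ is a column and $L \in \mathbb{C}^{1\times\nu}$ a row, the matrix $D_k^{-1} G L$ has rank one; applying the matrix determinant lemma, equivalently $\det(I_\nu + XY) = \det(I_1 + YX)$ with $X = D_k^{-1}G$ and $Y = L$, yields $\det(I + D_k^{-1} G L) = 1 + L D_k^{-1} G$. Combining the two displays gives $\det(\lambda_k I - S + GL) = \big(\prod_{i=1}^\nu \theta_{ki}\big)\,(1 + L D_k^{-1} G)$.

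Because $\prod_{i=1}^\nu \theta_{ki} \neq 0$, this determinant vanishes if and only if $1 + L D_k^{-1} G = 0$; running this equivalence over $k = 1:\ell$ produces exactly the linear system \eqref{eq_G_constraint_explicit}, establishing both implications of the claim. I do not expect a genuine obstacle here: the only points needing care are the invertibility of $D_k$ (secured by the standing assumption $\lambda_k \notin \sigma(S)$) and the rank-one structure of $GL$ that makes the matrix determinant lemma applicable. With $L = [1\ \dots\ 1]$ the scalar $L D_k^{-1} G$ is simply $\sum_{i=1}^\nu G_i/\theta_{ki}$, so \eqref{eq_G_constraint_explicit} is an explicit set of $\ell$ linear equations in the entries of $G$, requiring no solution of a Sylvester equation.
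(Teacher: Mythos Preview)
Your proposal is correct and follows essentially the same route as the paper's own proof: both reduce the pole condition to $\det(D_k + GL) = 0$ with $D_k = \lambda_k I - S$ diagonal and invertible, and then invoke the rank-one determinant identity (you call it the matrix determinant lemma, the paper cites Sherman--Morrison--Woodbury) to obtain $\det(D_k + GL) = \det(D_k)\,(1 + L D_k^{-1} G)$, from which \eqref{eq_G_constraint_explicit} follows. Your write-up is slightly more explicit about the invertibility of $D_k$ and the factorization step, but the argument is the same.
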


\vspace{-10pt}

\begin{proof}
Note that $\lambda_i\in\mathbb C$ is a pole of $K_G(s)$ from \eqref{tf_redmod_CPi} if $\det (\lambda_iI-S+GL)=0$. Explicitly writing the determinant yields the equivalent equation
\vspace{-10pt}
\begin{equation*}\label{eq_sys_G_pole_canonic}
\begin{split}
& \left| \begin{matrix} \theta_{k1}+g_1 & g_1 & \dots & g_1 \\ g_2 & \theta_{k2}+g_2 & \dots & g_2 \\ \vdots & \vdots & \ddots & \vdots \\ g_\nu & g_\nu & \dots & \theta_{k\nu}+g_\nu\end{matrix}\right|=0,\\& \theta_{ki}=\lambda_k-s_i,\ i=1:\nu,\ k=1:\ell
\end{split}
\end{equation*}
and equivalently, in matrix form
$
\det (D_k+GL) = 0,
$
where $D_k=\diag(\theta_{k1},\dots,\theta_{k\nu})$, for each $k=1:\ell$. Using the well-known Sherman-Morrison-Woodbury formula \cite{horn-johnson-1985}, the claim follows immediately.\fin
\end{proof}

\vspace{-10pt}



\subsection{Zero placement as linear constraints}\label{sect_z_place}


Consider a system \eqref{system} and the family of $\nu$ order models $\Sigma_G$ that approximate \eqref{system} by matching $\nu$ moments, for all $G\in\mathbb R^\nu$. Let $z_1, ...,\ z_k\in\mathbb{R}$, $k\leq \nu$. By, e.g., \cite{astolfi-TAC2010,ionescu-iftime-ACC2012,iftime-i-ECC2013}, there exists a subfamily of models $\Sigma_G$, with the property that the set of zeros of each model contains $z_1, ...,\ z_k$. Equivalently, there exists $G$ such that
\begin{equation}\label{eq_assign_zeros}
\det\left[\begin{array}{cc} z_iI- S &  G \\ C\Pi & 0\end{array}\right]=0,\quad i=1:k.\end{equation}

Now, let $G=[g_1\ g_2\ \dots\ g_\nu]^T\in\mathbb R^\nu$. Then, it follows immediately that condition \eqref{eq_assign_zeros} is equivalent to a system of $k$ equations with $\nu$ unknowns $g_1,\dots, g_\nu$, given by
\begin{align*}\label{eq_G_zero}
(-1)^\nu\left[-g_1\zeta_1(z_1)+g_2\zeta_2(z_1)+\dots +(-1)^{\nu}g_\nu\zeta_\nu(z_1)\right] &=0, \nonumber \\
&  \vdots \\
(-1)^\nu\left[-g_1\zeta_1(z_k)+g_2\zeta_2(z_k)+\dots +(-1)^{\nu}g_\nu\zeta_\nu(z_k)\right] &=0, \nonumber
\end{align*}
with $\zeta_j(s)$ polynomials of degree $\nu-1$, $j=1:\nu$.
Note that when $S$ is diagonal and $L$  the polynomial equations can be replaced by a linear system in the unknown $G\in\mathbb{C}^\nu$.

\begin{prop} 
\label{prop:zeros}
Let $S=\diag\{s_1,\dots,s_\nu\}$, $L=[1\ \dots\ 1]$ and explicitly write the moments $C\Pi=[\eta_1\ \dots\ \eta_\nu]$.  Then $\Sigma_G$ as in \eqref{redmod_CPi} is a model which has $\{z_1,\dots,z_k\}$ among the zeros of the transfer function $K_G(s)$ given by \eqref{tf_redmod_CPi}, if and only if the elements of the matrix $G=[g_1\ \ g_2\ \dots\ g_\nu]^T$ satisfy the linear equations
\begin{equation}\label{eq_G_zero_explicit}
\sum_{i=1}^\nu \frac{\eta_i}{\gamma_{ji}}g_i=0,\quad j=1:k,
\end{equation}
where  $\gamma_{ji}=z_j-s_i,\ i=1:\nu,\ j=1:k.$
\end{prop}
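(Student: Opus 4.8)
The plan is to start from the determinantal characterization \eqref{eq_assign_zeros} of the zeros of $\Sigma_G$, which is valid for any $(S,L)$, and specialize it to $S=\diag\{s_1,\dots,s_\nu\}$, $L=[1\ \dots\ 1]$, $C\Pi=[\eta_1\ \dots\ \eta_\nu]$. Using the Schur complement of the $(1,1)$ block $z_jI-S$ (which is invertible because $z_j\notin\sigma(S)$, since $\{s_i\}=\sigma(S)$), the $(\nu+1)\times(\nu+1)$ determinant in \eqref{eq_assign_zeros} factors as
\begin{equation*}
\det\!\left[\begin{array}{cc} z_jI-S & G \\ C\Pi & 0\end{array}\right] = \det(z_jI-S)\cdot\bigl(0 - C\Pi(z_jI-S)^{-1}G\bigr).
\end{equation*}
Since $\det(z_jI-S)=\prod_{i=1}^\nu(z_j-s_i)=\prod_{i=1}^\nu\gamma_{ji}\ne 0$, the determinant vanishes if and only if $C\Pi(z_jI-S)^{-1}G=0$. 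With $S$ diagonal, $(z_jI-S)^{-1}=\diag(\gamma_{j1}^{-1},\dots,\gamma_{j\nu}^{-1})$, so $C\Pi(z_jI-S)^{-1}G=\sum_{i=1}^\nu \eta_i\gamma_{ji}^{-1}g_i$, which is exactly the left-hand side of \eqref{eq_G_zero_explicit}. Running this equivalence for each $j=1:k$ gives the claimed system.

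First I would record the standing hypotheses that make each step legitimate: $z_j\notin\sigma(S)$ so the Schur complement exists; $\Sigma_G$ is a genuine $\nu$-order model, i.e. $\sigma(S-GL)\cap\sigma(S)=\emptyset$, so $K_G$ in \eqref{tf_redmod_CPi} is well-defined and $z_j$ being a root of its numerator is equivalent to $z_j$ being a zero in the sense used here. Then I would state the Schur-complement identity $\det\begin{bmatrix}X & Y\\ Z & W\end{bmatrix}=\det X\det(W-ZX^{-1}Y)$ for invertible $X$, apply it with $X=z_jI-S$, $Y=G$, $Z=C\Pi$, $W=0$, and simplify as above. The ``if and only if'' is then immediate: the product form shows the determinant is zero precisely when the scalar Schur complement $-\sum_i\eta_i\gamma_{ji}^{-1}g_i$ is zero.

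I do not expect a serious obstacle here; the proof is essentially a one-line Schur-complement computation, mirroring the Sherman–Morrison–Woodbury argument used in Proposition \ref{prop_G_pole_explicit}. The only point needing a word of care is confirming that the determinantal condition \eqref{eq_assign_zeros} really does characterize the zeros of $K_G$ — this is the invariant-zero / transmission-zero pencil $\begin{bmatrix} zI-(S-GL) & G\\ C\Pi & 0\end{bmatrix}$ written in the equivalent form $\begin{bmatrix} zI-S & G\\ C\Pi & 0\end{bmatrix}$ after a unimodular row operation absorbing $GL$, and it is invoked in the cited references \cite{astolfi-TAC2010,ionescu-iftime-ACC2012,iftime-i-ECC2013}; I would cite it rather than reprove it. A minor bookkeeping remark: the $(-1)^\nu$ factors and alternating signs appearing in the intermediate polynomial display before the proposition are exactly the cofactor-expansion artifacts that cancel in the clean Schur-complement form, so writing the proof via the Schur complement both shortens it and makes the sign conventions transparent.
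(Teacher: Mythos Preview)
Your proof is correct and follows essentially the same approach as the paper: both start from the determinantal characterization \eqref{eq_assign_zeros}, exploit the invertibility of $z_jI-S$ (since $z_j\notin\sigma(S)$), and reduce the condition to the scalar equation $\sum_i \eta_i\gamma_{ji}^{-1}g_i=0$. The only cosmetic difference is that the paper expands the $(\nu+1)\times(\nu+1)$ determinant by cofactors along the last column and then divides by $\prod_l\gamma_{jl}$, whereas you package the same computation as a Schur-complement identity; the content is identical.
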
 


\begin{proof}
Note that  $\{z_1,\dots,z_k\}$ are zeros of $K_G(s)$ if and only if \eqref{eq_assign_zeros} is satisfied, i.e.,
\begin{equation*}\label{eq_sys_G_zero_canonic}
\begin{split}
&\left| \begin{matrix} \gamma_{j1} & 0 & 0 & \dots & 0 & g_1 \\ 0 & \gamma_{j2}& 0 & \dots & 0 & g_2 \\ \vdots & \vdots & \ddots & \vdots& \vdots & \vdots\\ 0 & 0 & 0 &\dots & \gamma_{j\nu}& g_\nu \\ \eta_1 & \eta_2 &\eta_3 & \dots & \eta_\nu & 0 \end{matrix}\right|=0,\\& \gamma_{ji}=z_j-s_i,\ i=1:\nu,\ j=1:k.
\end{split}
\end{equation*}
First, note that $\gamma_{ji}\ne 0$ for all $i,j$.
Successively decomposing the determinant by the last column and computing the resulting minors performing row decomposition yields
$
\sum_{i=1}^\nu \eta_i g_i \prod_{l=1:\nu,l\ne i}\gamma_{jl}=0,\quad j=1:k.
$ 
Dividing by $\prod_{l=1:\nu}\gamma_{jl}\ne 0, j=1:k$ leads to the claim.\fin
\end{proof}
%



\subsection{Matching high order moments as linear constraints}\label{sect_partial_deriv}


{\rev In this section we explicitly determine the matrix $G\in\mathbb R^\nu$ yielding the subfamily of models that match $\nu$ moments of order $0:j_i$ and $\mu\leq\nu$ moments of order $1:j_i+1$ of $K(s)$ at $\sigma(S)$. Without loss of generality, let $S=\diag(S_p,\Sd), \Sd\in\mathbb R^{\mu\times\mu}$.
We now use the state space representations of $K$ and $K'=\opd K/\opd s$, respectively. Let $L=[L_1^T\ L_2^T], L_2\in\mathbb{C}^{\mu}$. Let $\Pi=[\Pi_{1},...,\Pi_{\nu}]$ be the unique
solution of the Sylvester equation (\ref{eq_Sylvester_Pi}) and
$\Ud$ be the unique solution of the Sylvester equation
\begin{equation}\label{eq_Sylvester_Yd}
\Sd\Ud=\Ud A+RC,
\end{equation} 


with $R=L_2$. We assume that the pair $(\Sd,R)$ is controllable such that $\rank \Ud=\mu$. Then the moments of order $1:j_i+1$ of $K(s)$ at $\sigma(\Sd)$, are the moments of order $0:j_i$ of  $K'(s)$ at $\sigma(\Sd)$,  given by the elements of the matrix $\Ud\Pi\in\mathbb{C}^{\mu\times\nu}$, see \cite{i-TAC2016}. Then define 
$
\Sigma':\,\dot{x}=Ax+Bu,\ \dot{z}=Az+x,\ y=-Cz,
$
where $z\in\mathbb{R}^{n}$ and $y\in\mathbb{R}$, with the transfer function $K'$. Interconnecting $\Sigma'$ to the signal generator
\begin{equation}
\label{signal_gen-1}
\dot\omega=S\omega,\quad \theta=L\omega,\ \omega(0)\ne 0,\ \omega\in\mathbb R^\nu
\end{equation}
by $u=\theta$ and to the generalized signal generator
\begin{equation}
\label{gen_signal_gen}
\dot{\varpi}=\Sd\varpi+Rw, d=\varpi+\Ud z, \varpi(0)=0, \varpi\in\mathbb{R}^{\mu},
\end{equation}
by $w=y$, where $\Ud$ is the unique solution of \eqref{eq_Sylvester_Yd} and $R=L_2$, yields the
output signal $d(t)$. Then, by \cite[Theorem 2]{i-TAC2016}, the $0:j_i$ order moments of $K'$  at $\sigma(\Sd)$ are given by the steady-state behaviour of the signal $d$.}

We now impose  matching properties at the first order derivative of $K(s)$ in the sense of matching the relation defining signal $d(t)$. 
Consider a model $\Sigma_G$ as in \eqref{redmod_CPi} with the transfer function $K_G(s)$ given by \eqref{tf_redmod_CPi} and the state-space representation of $K'_G(s)$ as in \cite{i-TAC2016},
$
\Sigma'_G\!\!:\dot{\xi}=(S-GL)\xi+Gu,\, \dot{\chi}=(S-GL)\chi+\xi,\, \eta=-C\Pi\chi,
$
with $\chi(t)\in\mathbb{R}^{\nu}$. Considering the interconnection of ${\Sigma}'_G$ to the signal
generators $\dot\omega=S\omega,\quad \theta=L\omega,\ \omega(0)\ne 0,\ \omega\in\mathbb R^\nu$
by $u=\theta$ and
$
\dot{\varpi}=\Sd\varpi+Rw, d=\varpi+\Ud z, \varpi(0)=0, \varpi(t)\in\mathbb{R}^{\mu},$ by $v=\tilde{\eta}$, respectively, yields the output $\zeta(t)=\varpi(t)+{P}\chi(t).$ We say that the moments of order $j_i+1$ of $K_G$ match the $j_i+1$ moments of $K$ at $\sigma(\Sd)$ if the dynamics of $\zeta(t)$ are similar to the dynamics of $d(t)$ from \eqref{gen_signal_gen}, i.e., $\dot{\zeta}=\Sd\zeta+\Ud\Pi\xi,$ with $\Ud$ the solution of \eqref{eq_Sylvester_Yd} and $\Pi$ the solution of \eqref{eq_Sylvester_Pi}. The next result presents the closed form of $G\in\mathbb C^\nu$ such that $K_G$ matches $\nu$ moments of order $j_i$ of $K$ at $\sigma(S)$ and $K_G$ matches $\mu\leq\nu$ moments of order $j_i+1$ of $K$ at $\sigma(\Sd)$.
\begin{thm}\label{thm_match_deriv} Let $\Pi$ be the unique solution of
(\ref{eq_Sylvester_Pi}) and $\Ud$ be the unique solution
of (\ref{eq_Sylvester_Yd}). Consider a model $\Sigma_G$ as in \eqref{redmod_CPi}.  {\rev Then the $\mu$ moments of order $1:j_i+1$ of $K_G$ at $ \sigma(\Sd)$ match the $\mu$ moments of order $1:j_i+1$ of $K$ at $\sigma(\Sd)\subset\sigma(S),$ if and only if}
\begin{equation}\label{eq_Gd}
\Ud\Pi G= \Ud B.
\end{equation}
\end{thm}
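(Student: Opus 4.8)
The plan is to mirror the structure of the proof of Theorem~\ref{thm_G_pole}, but now working with the interconnection signals $d(t)$ and $\zeta(t)$ rather than with a single eigenvalue. First I would set up the two "augmented" systems: the cascade $\Sigma'$ interconnected with the signal generator \eqref{signal_gen-1} and the generalized signal generator \eqref{gen_signal_gen}, whose steady-state output $d(t)$ encodes (via \cite[Theorem~2]{i-TAC2016}) the moments of order $1:j_i+1$ of $K$ at $\sigma(\Sd)$; and the reduced counterpart $\Sigma'_G$ interconnected analogously, with output $\zeta(t)=\varpi(t)+P\chi(t)$. By Proposition~\ref{prop_FGL} and the observability of $(L,S)$ we may take $P=I$, so $\zeta=\varpi+\chi$. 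Matching the $\mu$ high order moments means, by the definition adopted just before the theorem, that $\zeta$ satisfies $\dot\zeta=\Sd\zeta+\Ud\Pi\xi$, i.e. $\zeta$ has the same driven dynamics as $d$.

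Next I would compute $\dot\zeta$ directly. Since $\dot\varpi=\Sd\varpi+R\eta$ with $\eta=-C\Pi\chi$ and $R=L_2$, and $\dot\chi=(S-GL)\chi+\xi$, we get
\begin{equation*}
\dot\zeta=\Sd\varpi - L_2 C\Pi\chi + (S-GL)\chi + \xi.
\end{equation*}
On the other side, the target dynamics $\dot\zeta=\Sd\zeta+\Ud\Pi\xi=\Sd\varpi+\Sd\chi+\Ud\Pi\xi$. Subtracting, the required identity is equivalent to
\begin{equation*}
\bigl[(S-GL) - L_2 C\Pi - \Sd\bigr]\chi + \bigl[I-\Ud\Pi\bigr]\xi = 0
\end{equation*}
holding along trajectories; since $\xi$ and $\chi$ are (generically, by the genericity of initial conditions and the observability/controllability hypotheses) independent, this forces the coefficient of $\xi$ and the coefficient of $\chi$ to vanish separately. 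Here I would compare this with the structural identity that defines $\Ud$: post-multiplying the Sylvester equation \eqref{eq_Sylvester_Yd}, $\Sd\Ud=\Ud A+RC$, by $\Pi$ and using \eqref{eq_Sylvester_Pi}, $A\Pi=\Pi S-BL$, gives $\Sd\Ud\Pi=\Ud\Pi S-\Ud BL+RC\Pi$. Rearranging, $\Ud\Pi(S-\? )$ should be matched against $(S-GL)$ applied on the $\Ud\Pi$-image; the key computation is that $\Ud\Pi(S-GL)=\Sd\Ud\Pi + (\Ud B-\Ud\Pi G)L - R C\Pi$, so $\Ud\Pi(S-GL)=\Sd(\Ud\Pi)$ if and only if $(\Ud B-\Ud\Pi G)L=RC\Pi$. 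I would then show that, with $R=L_2$ and the block structure $L=[L_1^T\ L_2^T]$, $S=\diag(S_p,\Sd)$, the term $RC\Pi$ is exactly what is needed so that the residual reduces to $\Ud\Pi G=\Ud B$; equivalently, one checks that $\Ud\Pi$ intertwines $S-GL$ with $\Sd$ precisely under \eqref{eq_Gd}, which is the algebraic heart of the statement.

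For the converse direction I would run the same computation backwards: assuming \eqref{eq_Gd}, the intertwining $\Ud\Pi(S-GL)=\Sd(\Ud\Pi)$ holds, hence applying $\Ud\Pi$ to the $\chi$-equation of $\Sigma'_G$ and adding the $\varpi$-equation yields exactly $\dot\zeta=\Sd\zeta+\Ud\Pi\xi$, which by \cite[Theorem~2]{i-TAC2016} applied to the reduced model means the $0:j_i$ moments of $K'_G$ at $\sigma(\Sd)$ — equivalently the $1:j_i+1$ moments of $K_G$ at $\sigma(\Sd)$ — coincide with those of $K$. The main obstacle I anticipate is the careful bookkeeping of the block partition $S=\diag(S_p,\Sd)$, $L=[L_1^T\ L_2^T]$, $R=L_2$: one must verify that $\Ud$, defined via the $\Sd$-block only, still satisfies the right intertwining when multiplied by the full $\Pi$, and that the cross term $RC\Pi$ appearing from the Sylvester manipulation matches the corresponding block of $\Ud\Pi GL$; i.e. that the "high order moment" information genuinely lives in the $\Sd$-block and the constraint decouples cleanly. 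Once that block algebra is pinned down, the equivalence with \eqref{eq_Gd} is immediate from the rank condition $\rank\Ud=\mu$ together with $\rank\Pi=\nu$.
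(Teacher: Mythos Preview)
Your proposal has a genuine gap at the very first step: you conflate two different matrices that the paper both calls $P$. The $P$ in Proposition~\ref{prop_FGL} is the $\nu\times\nu$ change of coordinates satisfying $FP+GL=PS$, and for the canonical realization \eqref{redmod_CPi} it is indeed the identity. But the $P$ appearing in $\zeta=\varpi+P\chi$ is a different object: it is the $\mu\times\nu$ matrix that plays the role of $\Ud$ for the \emph{reduced} derivative system (compare with $d=\varpi+\Ud z$ in \eqref{gen_signal_gen}), and it is precisely the unknown to be determined by the matching requirement. Setting $P=I$ is already a dimension mismatch when $\mu<\nu$; your resulting $\xi$-coefficient ``$I-\Ud\Pi$'' does not even type-check, and forcing it to vanish would impose $\Ud\Pi=I$, which is generically false.

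The paper's argument keeps $P$ free and reads off $P=\Ud\Pi$ from the $\xi$-coefficient of $\dot\zeta=\Sd\zeta+\Ud\Pi\xi$; then the $\chi$-coefficient gives $P(S-GL)-RC\Pi=\Sd P$, i.e.\ $\Ud\Pi S-\Sd\Ud\Pi=RC\Pi+\Ud\Pi GL$. At this point your Sylvester manipulation---post-multiplying \eqref{eq_Sylvester_Yd} by $\Pi$ and substituting \eqref{eq_Sylvester_Pi}---is exactly the right move and collapses the equation to $\Ud\Pi GL=\Ud BL$, hence \eqref{eq_Gd}. Note, however, that the ``clean intertwining'' $\Ud\Pi(S-GL)=\Sd\Ud\Pi$ you aim for is \emph{not} what holds: the term $RC\Pi$ does not vanish under \eqref{eq_Gd}; rather, it is exactly what is cancelled by the $-RC\Pi\chi$ contribution coming from $\dot\varpi$. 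In particular, no block-decoupling argument for $S=\diag(S_p,\Sd)$ is needed; once $P=\Ud\Pi$ is correctly identified, the computation goes through with the full $\nu\times\nu$ matrix $S$ and the full row $L$.
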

%
%
\begin{proof} We first prove the necessity. Since $\zeta=\varpi+P\chi$, then $\dot\zeta=\dot\varpi+P\dot\chi.$ The moments of $\Sigma'_G$ match the moments of $\Sigma'$ at $\sigma(\Sd)$ if $\dot\zeta=\Sd\zeta+\Ud\Pi\xi$. Hence, since $\dot{\varpi}=\Sd\varpi+Rw$ and $w=\eta$ , where $\eta$ is the output of $\Sigma'_G$, we write $\Sd\varpi-RC\Pi\chi+P(S-GL)\chi+P\xi=\Sd\varpi+\Sd P\chi+\Ud\Pi\xi,$ for all $\xi$ and $\chi.$ Then, $P=\Ud\Pi$ and $PS-\Sd P=RC\Pi+PGL$. Equivalently, $\Sd\Ud\Pi-\Ud\Pi S=\Ud\Pi GL+RC\Pi$. Hence $\Ud\Pi GL=\Sd\Ud\Pi-\Ud\Pi S-RC\Pi$. By \eqref{eq_Sylvester_Yd}, $Q\Ud\Pi=(\Ud A+RC)\Pi$. Then, $\Ud\Pi GL=\Ud\Pi S-\Ud A\Pi.$ By \eqref{eq_Sylvester_Pi}, $\Ud A\Pi = \Ud (\Pi S-BL)$ and then the claim follows. Since the sufficiency uses similar arguments, the proof is omitted.\fin
\end{proof}
\vspace*{-10pt}
\begin{obs}\n
If $\mu=\nu$, the result in  \cite{i-TAC2016} is a particular case of \eqref{eq_Gd}. Hence, all the $\nu$ moments of order $1:j_i+1$ of $K_G(s)$ are matched at $\sigma(\Sd)=\sigma(S)$, by selecting
$
G=(\Ud\Pi)^{-1}\Ud B,
$
with $\Ud\Pi\in\mathbb{C}^{\nu\times\nu}$ assumed invertible.
\end{obs}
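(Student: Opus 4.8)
The plan is to treat this observation as a direct specialization of Theorem~\ref{thm_match_deriv}, exploiting the fact that the rectangular matrix $\Ud\Pi$ becomes \emph{square} once $\mu=\nu$. First I would set $\mu=\nu$ in the construction preceding the theorem: then $\Sd\in\mathbb{R}^{\nu\times\nu}$ with $\sigma(\Sd)=\sigma(S)$, the partition $L=[L_1^T\ L_2^T]$ degenerates to $L=L_2=R$, and $\Ud\in\mathbb{R}^{\nu\times n}$ is the unique solution of \eqref{eq_Sylvester_Yd} with $\rank\Ud=\nu$. The equality $\sigma(\Sd)=\sigma(S)$ is precisely what makes the $\mu=\nu$ matched moments of order $1:j_i+1$ exhaust \emph{all} such moments of $K$ at $\sigma(S)$, which is the content of the phrase ``all the $\nu$ moments of order $1:j_i+1$ are matched''.

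Next I would carry out the dimension bookkeeping that legitimizes the inversion. Since $\Ud\in\mathbb{R}^{\nu\times n}$ and $\Pi\in\mathbb{R}^{n\times\nu}$, the product $\Ud\Pi$ lies in $\mathbb{C}^{\nu\times\nu}$, so the matching condition \eqref{eq_Gd}, namely $\Ud\Pi G=\Ud B$, is a square linear system in the unknown $G\in\mathbb{C}^\nu$. By Theorem~\ref{thm_match_deriv} applied with $\mu=\nu$, this system is \emph{equivalent} to matching all $\nu$ derivative moments, so no further matching argument is required. Under the stated hypothesis that $\Ud\Pi$ is invertible, the system has the unique solution $G=(\Ud\Pi)^{-1}\Ud B$; and any $\Sigma_G$ built from this $G$ matches the $\nu$ moments of $K$ itself automatically, since that holds for \emph{every} admissible $G$ by construction of the family \eqref{redmod_CPi}. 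Thus the single choice $G=(\Ud\Pi)^{-1}\Ud B$ simultaneously secures the $\nu$ moments of $K$ and the $\nu$ moments of $K'$, i.e.\ all $2\nu$ interpolation conditions.

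Finally I would identify this $G$ with the reduced model of \cite{i-TAC2016}. The cleanest route is a uniqueness argument: any order-$\nu$ member of \eqref{redmod_CPi} matching the $\nu$ moments of $K$ together with all $\nu$ moments of $K'$ at $\sigma(S)$ must satisfy \eqref{eq_Gd}, and invertibility of $\Ud\Pi$ pins $G$ down uniquely; since the model of \cite{i-TAC2016} matches exactly these $2\nu$ moments, it must coincide with the one produced by $G=(\Ud\Pi)^{-1}\Ud B$. The main obstacle is this last step: to claim genuine \emph{equality} with the reference rather than mere agreement of matched data, I would have to compare against the explicit construction in \cite{i-TAC2016} and verify that it is itself determined by exactly $\nu$ independent interpolation conditions, so that the uniqueness of $G$ under the invertibility hypothesis truly identifies the two constructions. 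I expect this to be routine, but it is the only place where one must step outside the present self-contained computation.
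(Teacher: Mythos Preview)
Your proposal is correct and matches the paper's intent. The paper does not give a separate proof for this observation: it is stated as an immediate remark following Theorem~\ref{thm_match_deriv}, and your argument---specialize $\mu=\nu$ so that $\Ud\Pi$ is square, invoke the theorem to reduce matching to \eqref{eq_Gd}, then invert---is exactly the reading the paper expects. Your final paragraph on identifying the construction with that of \cite{i-TAC2016} via uniqueness is more than the paper spells out, but it is the natural (and correct) way to justify the phrase ``is a particular case of.''
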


\subsection{The linear algebraic system of all constrains}


Let $\Sigma_G$ define a family of $\nu$ order models that match $\nu$ zero order moments  of \eqref{system} at $\{s_1,\dots,s_{\nu}\},$ parametrized in $G=[g_1\ \dots\ \ g_\nu]^T\in\mathbb{C}^\nu$, where $S$ is in diagonal form. Let $\{\lambda_1,\dots,\lambda_\ell\}\subset\mathbb{C}\setminus\sigma(S)$ and $\{z_1,\dots,z_k\}\subset\mathbb{C}$, $\ell+k\leq\nu$. Collecting the  linear constraints \eqref{eq_G_constraint_explicit}, \eqref{eq_G_zero_explicit} and \eqref{eq_Gd} yield the following system of linear equations in $G$:
%
%
\begin{equation}\label{eq_constraints_all}
\begin{cases}
1+LD_k^{-1}G=0,\quad k=1:\ell, \\
\sum_{i=1}^\nu \frac{\eta_i}{\gamma_{ji}}g_i=0,\quad j=1:k, \\
\Ud\Pi G= \Ud B,
\end{cases}
\end{equation}
%
%
with $D_k=\diag(\theta_{k1},\dots,\theta_{k\nu}), \theta_{ki}=\lambda_k-s_i,\ i=1:\nu,\ k=1:\ell,$ $\gamma_{ji}=z_j-s_i,\ i=1:\nu,\ j=1:k$, $\Ud$ is the solution of \eqref{eq_Sylvester_Yd} and $\Pi$ is the solution of \eqref{eq_Sylvester_Pi}. Then, $G$ satisfying \eqref{eq_constraints_all} yields the $\nu$ order model $\Sigma_G$ with the properties


\begin{itemize}
\item $\Sigma_G$ has $\ell$ poles at $\lambda_1,\dots,\lambda_\ell$,
\item $\Sigma_G$ has $k$ zeros at $z_1,\dots,z_k$,
\item $\nu\!-k\!-\ell$ first order moments  of $K_G$ and $K$ match.
\end{itemize}


However, in practice, the models \eqref{system} are not known, motivating the extension of the results to the data-driven model order reduction using the Loewner matrices \cite{mayo-antoulas-LAA2007}. 



{\section{\rev Loewner matrices-based modeling with pole placement and {high order moment matching}}
\label{sect_Loewner_PZD}

In this section we construct a general version of the Loewner matrices given in \cite{mayo-antoulas-LAA2007}, {\rev from the data to match $\nu$ {\em zero} order moments of $K$ from \eqref{tf} at $\{s_1,s_2,\dots, s_\ell,s_{\ell+1},\dots, s_\nu\}$, place $\ell$ poles at $\{\lambda_1,\dots,\lambda_\ell\}$, with $\ell\leq\nu$ and $\lambda_i\ne s_j, i,j=1:\ell$} and match $\nu-\ell$ {\em first} order moments of $K$ at $\{s_{\ell+1},\dots, s_\nu\}$, i.e.,
\begin{subequations}\label{eq_Loewner_general}
\begin{align}
\Lo_{ij} & =\begin{cases} 
\cfrac{\G(\lambda_i)-\G(s_j)}{\lambda_i-s_j}, & i,j=1:\ell, \\ 
-\cfrac{\G(s_i)-\G(s_j)}{s_i-s_j}, & i\ne j=\ell+1:\nu,\\ 
-\G'(s_i), & i=j=\ell+1:\nu,
\end{cases} \label{eq_Loewner_gen}\\
\sLo_{ij} & = \begin{cases} 
-\cfrac{\lambda_i\G(\lambda_i)-s_j\G(s_j)}{\lambda_i-s_j}, &\!\!\!\!\! i,j=1:\ell,\\
-\cfrac{s_i\G(s_i)-s_j\G(s_j)}{s_i-s_j}, &\!\!\!\!\! i\ne j=\ell+1:\nu \\ 
-s_i\G'(s_i), &\!\!\!\!\! i=j=\ell+1:\nu. 
\end{cases} \label{eq_shifted_Loewner_gen}
\end{align}
\end{subequations}
Let 
\begin{equation}\label{eq_S_Lo}
S=\diag(s_1,\dots, s_\ell,s_{\ell+1},\dots, s_\nu)=\diag(S_p,\Sd), 
\end{equation} 
with $S_p=\diag(s_1,\dots,s_\ell), \Sd=\diag (s_{\ell+1},\dots, s_\nu)$ and let $L=[1\ 1\ \dots\ \ 1]=[L_1\ L_2]\in\mathbb R^{1\times\nu}, L_2\in\mathbb R^{1\times(\nu-\ell)}$. Furthermore, let 
\begin{equation}\label{eq_Q_Lo}
Q\!=\!\diag(\lambda_1,\dots,\lambda_\ell,s_{\ell+1},\dots, s_\nu)\!=\!\diag(\Qp,\Sd),
\end{equation} 
with $\Qp=\diag(\lambda_1,\dots,\lambda_\ell)$. Let $\Pi$ be the solution of the Sylvester equation \eqref{eq_Sylvester_Pi}, $A\Pi+BL=\Pi S$. Furthermore construct $\Upsilon=[\Up^T\ \Ud^T]^T\in \mathbb R^{\nu\times n}$, where $\Up$ is the unique solution of the Sylvester equation \eqref{eq_Sylvester_Y}, $\Qp\Up=\Up A+\Rp\Cp,$ where $\Cp\in\mathbb R^{1\times n}$ such that $\Cp\Pi=0$ and $\Ud$ is the unique solution of the Sylvester equation \eqref{eq_Sylvester_Yd}, $\Sd\Ud=\Ud A+RC$, where $R=L_2^T$. Note that, in matrix form, $\Upsilon$ is the unique solution of the Sylvester equation
\begin{equation}\label{eq_Sylvester_Yii}
Q\Upsilon=\Upsilon A+ \mathbf R(\Cp, C),
\end{equation}
where $\mathbf R(\Cp, C)=[(\Rp\Cp)^T\ (RC)^T]^T.$  The next result result states that the Loewner matrices given by equations \eqref{eq_Loewner_general} can be written directly in terms of $\Upsilon$ and $\Pi$ and that they are the solutions of two Sylvester equations. 
\begin{thm}\label{thm_Lo_YPi_gen}
Consider the Loewner matrices from \eqref{eq_Loewner_general} and the matrices $S$ and $Q$ defined by \eqref{eq_S_Lo} and \eqref{eq_Q_Lo}. Let $\Pi$ be the unique solution of \eqref{eq_Sylvester_Pi} and $\Upsilon$ be the unique solution of \eqref{eq_Sylvester_Yii} and assume that the matrix $\Upsilon\Pi$ is invertible. Consider the statements
\begin{enumerate}[label=(\arabic*)]
\item $\Lo$ is defined by equation \eqref{eq_Loewner_gen},
\item $\Lo=-\Upsilon\Pi$ and satisfies the Sylvester equation $\Lo S-Q\Lo=R \Cp \Pi-\Upsilon BL$,
\item $\sLo$ is defined by equation \eqref{eq_shifted_Loewner_gen},
\item $-\sigma\mathbb L=S-(\Upsilon\Pi)^{-1}\Upsilon BL$ and satisfies the Sylvester equation $\sLo S-Q\sLo=RC\Pi S-Q\Upsilon BL$.
\end{enumerate}
Then (1) $\Leftrightarrow$ (2) and (3) $\Leftrightarrow$ (4).
\end{thm}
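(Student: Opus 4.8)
The plan is to prove the two equivalences by exhibiting the common entrywise formula for $\Lo$ and $\sLo$ in terms of $\Upsilon$ and $\Pi$, and then checking the Sylvester identities by pure matrix manipulation of \eqref{eq_Sylvester_Pi} and \eqref{eq_Sylvester_Yii}. Since $S$, $Q$ and $L$ are all given explicitly in diagonal/all-ones form in \eqref{eq_S_Lo}--\eqref{eq_Q_Lo}, the key observation is that the solutions $\Pi$ and $\Upsilon$ of the respective Sylvester equations have closed-form columns/rows. For the equivalence (1) $\Leftrightarrow$ (2), I would first show (2) $\Rightarrow$ (1): from $\Lo = -\Upsilon\Pi$, write the $(i,j)$ entry as $-\Up_i \Pi_j$ or $-\Ud_i\Pi_j$ depending on the block, where $\Up_i$ is the $i$-th row of $\Up$, $\Ud_i$ the $i$-th row of $\Ud$, and $\Pi_j$ the $j$-th column of $\Pi$. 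Using \eqref{eq_Sylvester_Pi} with $S$ diagonal, the $j$-th column satisfies $(s_jI-A)\Pi_j = B$ (since $L$ is all ones), hence $\Pi_j = (s_jI-A)^{-1}B$; similarly from \eqref{eq_Sylvester_Y} the $i$-th row of $\Up$ satisfies $\Up_i(\lambda_iI-A) = \Rp_i\Cp$, i.e. $\Up_i = \Rp_i\Cp(\lambda_iI - A)^{-1}$, and from \eqref{eq_Sylvester_Yd} the $i$-th row of $\Ud$ is $\Ud_i = R_i C(s_iI-A)^{-1}$. Substituting and using a resolvent (second resolvent / Hilbert) identity $C(\lambda I-A)^{-1}(sI-A)^{-1}B = \frac{C(\lambda I -A)^{-1}B - C(sI-A)^{-1}B}{s - \lambda}$ recovers exactly the divided-difference entries in \eqref{eq_Loewner_gen}, and the diagonal case $i=j$ gives $-\G'(s_i)$ because $C(s_iI-A)^{-2}B = -\G'(s_i)$. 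I also need to normalize the free controllability vectors $\Rp$ and $R$: since $\mathbf R(\Cp,C)$ in \eqref{eq_Sylvester_Yii} is only defined up to the controllability scaling, I fix $\Rp = [1\ \dots\ 1]^T$ and $R = L_2^T = [1\ \dots\ 1]^T$ (consistent with $Q$, $\Sd$ diagonal) so the scalars drop out and the entries match \eqref{eq_Loewner_general} on the nose. The converse (1) $\Rightarrow$ (2) follows because both sides are $\nu\times\nu$ matrices with identical entries.

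For the Sylvester identity in (2), I would compute $\Lo S - Q\Lo = -\Upsilon\Pi S + Q\Upsilon\Pi$ and substitute $\Pi S = A\Pi + BL$ from \eqref{eq_Sylvester_Pi} and $Q\Upsilon = \Upsilon A + \mathbf R(\Cp,C)$ from \eqref{eq_Sylvester_Yii}: the $\Upsilon A\Pi$ terms cancel, leaving $-\Upsilon BL + \mathbf R(\Cp,C)\Pi$. Since $\mathbf R(\Cp,C) = [(\Rp\Cp)^T\ (RC)^T]^T$ and $\Cp\Pi = 0$, the top block of $\mathbf R(\Cp,C)\Pi$ vanishes while the bottom block is $RC\Pi$; writing $R\Cp\Pi$ loosely (with the understanding that the $\Cp$-block annihilates $\Pi$), this is exactly $R\Cp\Pi - \Upsilon BL$ as stated. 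So (2) is internally consistent and equivalent to (1).

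The equivalence (3) $\Leftrightarrow$ (4) proceeds identically, now with the key algebraic fact already packaged in Theorem \ref{thm_G_pole} and Theorem \ref{thm_match_deriv}: since $\Upsilon\Pi$ is invertible, $G := (\Upsilon\Pi)^{-1}\Upsilon B$ is the unique solution of $\Upsilon\Pi G = \Upsilon B$, which by those theorems places the poles $\lambda_1,\dots,\lambda_\ell$ and matches the $\nu-\ell$ first-order moments; then $-\sLo = S - (\Upsilon\Pi)^{-1}\Upsilon B L = S - GL$, and one checks entrywise that $S - GL$ has exactly the divided-difference-of-$s\G(s)$ entries of \eqref{eq_shifted_Loewner_gen} — equivalently $\sLo = \Lo\,(S-GL)$, a standard Loewner-pencil identity, which can be verified either entrywise via the resolvent identity applied to $sC(sI-A)^{-1}B$ or by noting $\sLo = -\Upsilon\Pi(S-GL) = -\Upsilon\Pi S + \Upsilon B L$ and using $\Pi S = A\Pi + BL$ together with $\Upsilon A\Pi = $ (from \eqref{eq_Sylvester_Yii}) $Q\Upsilon\Pi - \mathbf R(\Cp,C)\Pi = -Q\sLo/(-1)\cdots$. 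The Sylvester identity for $\sLo$ in (4) then drops out of $\sLo S - Q\sLo$ after substituting $\Pi S$ and $Q\Upsilon$ as before, picking up the extra $S$ and $Q$ factors on the resolvent terms.

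The main obstacle I anticipate is purely bookkeeping: keeping the block structure $S = \diag(S_p,\Sd)$, $Q = \diag(\Qp,\Sd)$, $L = [L_1\ L_2]$, $\Upsilon = [\Up^T\ \Ud^T]^T$ straight so that the divided-difference block ($i,j \le \ell$, involving $\lambda_i$ vs.\ $s_j$), the off-diagonal moment block ($i \ne j$, both $> \ell$), and the derivative/diagonal block ($i = j > \ell$) each come out of the right sub-resolvent identity with the right sign. In particular, the slight abuse of notation $R\Cp\Pi$ in statement (2) (where the $\Cp$ really belongs only to the first block and is killed by $\Pi$) needs a sentence of clarification, and the diagonal entries $-\G'(s_i)$ and $-s_i\G'(s_i)$ must be handled as limits of the divided differences, i.e.\ via $C(sI-A)^{-2}B = -\G'(s)$. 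Once the resolvent identity is stated once and reused, each of the three blocks is a one-line substitution, and the Sylvester equations are two-line cancellations; no genuinely hard step remains.
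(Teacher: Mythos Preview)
Your proposal is correct and follows essentially the same route as the paper: the paper also rewrites the divided differences in \eqref{eq_Loewner_gen} via the resolvent identity $C(\alpha I-A)^{-1}B-C(\beta I-A)^{-1}B=-(\alpha-\beta)\,C(\alpha I-A)^{-1}(\beta I-A)^{-1}B$, identifies the factors as the explicit rows $\Upsilon_i$ and columns $\Pi_j$ of the diagonal-$S$/$Q$ Sylvester solutions, concludes $\Lo_{ij}=-\Upsilon_i\Pi_j$, and then declares the $\sLo$ case ``similar, hence omitted''. Your write-up is in fact more explicit than the paper's on the Sylvester verification and on the $\sLo$ part, and your flagged bookkeeping concern about the $C_{\mathbf P}$ versus $C$ block and the shorthand $R C_{\mathbf P}\Pi$ is exactly the point where the paper's own proof is loosest.
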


\vspace{-10pt}

\begin{proof}
We first prove statement $(1)\Leftrightarrow (2)$. Note that \eqref{eq_Loewner_gen} can be equivalently written as 
\vspace*{-5pt}
\begin{align}\label{eq_Loewner_explicit}
\lambda_i \Lo-\Lo s_j &= C(\lambda_iI-A)^{-1}B-C(s_jI-A)^{-1}B, \nonumber\\
s_i \Lo-\Lo s_j &= C(s_iI-A)^{-1}B-C(s_jI-A)^{-1}B,
\end{align}
for all $i,j=1:\ell$ and for all $i\ne j=\ell+1:\nu$, respectively.
Note that, for any $\alpha\ne\beta\in\mathbb C$,
\vspace*{-5pt}
{\rev \begin{align*}
&\frac{C(\alpha I-A)^{-1}B-C(\beta I-A)^{-1}B}{\alpha-\beta} \\ & =\frac{C[(\alpha I-A)^{-1}-(\beta I-A)^{-1}]B}{\alpha-\beta} \nonumber\\
            & =\frac{C(\alpha I-A)^{-1}[\beta I-A-\alpha I+A](\beta I-A)^{-1}B}{\alpha-\beta} \nonumber \\
            &= -C(\alpha I-A)^{-1}(\beta I-A)^{-1}B.
\end{align*}
Hence, substituting $\alpha=\lambda_i, \beta=s_j$, for all $i,j=1:\ell$  and substituting $\alpha=s_i, \beta=s_j$, for all $i\ne j=\ell+1:\nu$,  yields
$
\Lo_{ij} = -C(s_iI-A)^{-1}(\lambda_jI-A)^{-1}B,\quad \forall i,j=1:\ell,$ and
$\Lo_{ij} = -C(s_iI-A)^{-1}(s_jI-A)^{-1}B,\quad \forall i\ne j=\ell+1:\nu.
$}
Moreover, by \eqref{eq_Loewner_gen},
$
\Lo_{ii} =-\G'(s_i)=C(s_iI-A)^{-2}B
            =C(s_iI-A)^{-1}(s_iI-A)^{-1}B, i=\ell+1:\nu.
$
Let $\Upsilon_i=C(s_iI-A)^{-1}$ and $\Pi_j=(\lambda_jI-A)^{-1}B$, $i,j=1:\nu$. It is straight forward that $\Upsilon=[\Upsilon_1^T\ \Upsilon_2^T\ \dots\ \Upsilon_\nu^T]^T\in\mathbb C^{\nu\times n}$ and $\Pi=[\Pi_1\ \Pi_2\ \dots\ \Pi_\nu]\in\mathbb C^{n\times\nu}$ are the (unique) solutions of the Sylvester equations \eqref{eq_Sylvester_Yii} and \eqref{eq_Sylvester_Pi}, respectively. Hence, $\Lo_{ij}$ as in \eqref{eq_Loewner_gen} can be written equivalently as
$
\Lo_{ij} =-\Upsilon_i\Pi_j,\quad\forall i,j=1:\ell\ \text{and}\ \forall i,j=\ell+1:\nu, i\ne j, $ and 
$\Lo_{ii}= -\Upsilon_i\Pi_i,\quad\forall i=\ell+1:\nu.
$
Furthermore, writing \eqref{eq_Loewner_explicit} for each $i,j$ yields the claim. The arguments for $\sLo$ are similar, hence omitted.\fin
\end{proof}
\vspace*{-10pt}
We now write the approximation $\Sigma_G$ matching $\nu$ zero order moments of \eqref{system}, $\ell$ pole constraints and $\nu-\ell$ first order moments of \eqref{system}, simultaneously.

\begin{thm}\label{thm_LisG} Let $\Sigma_G$ be a model described by the equations \eqref{redmod_CPi} with the transfer function \eqref{tf_redmod_CPi}. Then, for
\begin{equation}\label{eq_Gd_Loewner_all}
G=-\Lo^{-1}\Upsilon B,
\end{equation}
with $\Lo$ given by \eqref{eq_Loewner_gen} assumed invertible and $\Upsilon$ the solution of \eqref{eq_Sylvester_Yii}, the model $\Sigma_{-\Lo^{-1}\Upsilon B}$ matches $\nu$ zero order moments of \eqref{system} at $\sigma(S)=\{s_1,\dots, s_\nu\}$, has $\ell$ poles at $\{\lambda_1,\dots,\lambda_\ell\}\subset\sigma(Q)$ and matches $\nu-\ell$ first order moments of \eqref{system} at $\{s_{\ell+1},\dots,s_\nu\}\subset\sigma(S)$. Furthermore,
\begin{align}\label{eq_Loewner_approximant}
\G_{-\Lo^{-1}\Upsilon B}(s)=C\Pi(\sLo-s\Lo)^{-1}\Upsilon B.
\end{align}
\end{thm}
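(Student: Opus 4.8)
The plan is to combine the three ingredients already established: Theorem~\ref{thm_G_pole} for the pole constraints, Theorem~\ref{thm_match_deriv} for the first order moment constraints, and Theorem~\ref{thm_Lo_YPi_gen} which identifies the Loewner matrix $\Lo$ with $-\Upsilon\Pi$. First I would observe that, by the block structure $\Upsilon=[\Up^T\ \Ud^T]^T$ and $\Pi$, the single matrix equation $\Upsilon\Pi G=\Upsilon B$ decomposes exactly into the two blocks $\Up\Pi G=\Up B$ and $\Ud\Pi G=\Ud B$. The first block is precisely the pole-placement condition \eqref{eq_G_pole} of Theorem~\ref{thm_G_pole}, so it gives $\{\lambda_1,\dots,\lambda_\ell\}\subseteq\sigma(S-GL)$; the second block is precisely condition \eqref{eq_Gd} of Theorem~\ref{thm_match_deriv}, so it gives matching of the $\nu-\ell$ first order moments of $K$ at $\sigma(\Sd)=\{s_{\ell+1},\dots,s_\nu\}$. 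Since by Theorem~\ref{thm_Lo_YPi_gen} we have $\Lo=-\Upsilon\Pi$, the choice $G=-\Lo^{-1}\Upsilon B=(\Upsilon\Pi)^{-1}\Upsilon B$ (well-defined because $\Lo$, equivalently $\Upsilon\Pi$, is assumed invertible) solves $\Upsilon\Pi G=\Upsilon B$ and therefore satisfies both blocks simultaneously.

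Next I would dispatch the zero order moment matching and the well-posedness of $\Sigma_G$. The family \eqref{redmod_CPi} matches the $\nu$ zero order moments of \eqref{system} at $\sigma(S)$ for \emph{every} admissible $G$ (this is the content of Proposition~\ref{prop_FGL} and the paragraph following it), so no extra work is needed there beyond checking $\sigma(S-GL)\cap\sigma(S)=\emptyset$; for the pole-placement block this is guaranteed by the standing hypothesis $\lambda_i\ne s_j$ together with the fact that the $\Sd$-block of $S-GL$ inherits the spectrum needed for the derivative-moment interpolation, and one argues $\sigma(S-GL)$ avoids $\sigma(S)$ from invertibility of $\Upsilon\Pi$ and the Sylvester structure (as in the Observation after Theorem~\ref{thm_G_pole}). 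Then I would verify the transfer function formula \eqref{eq_Loewner_approximant}: starting from \eqref{tf_redmod_CPi}, $K_G(s)=C\Pi(sI-S+GL)^{-1}G$, substitute $G=(\Upsilon\Pi)^{-1}\Upsilon B$ and use the Sylvester identities from Theorem~\ref{thm_Lo_YPi_gen}, namely $\Lo=-\Upsilon\Pi$ and $-\sLo=S-(\Upsilon\Pi)^{-1}\Upsilon BL=S-GL$; multiplying $(sI-S+GL)^{-1}$ on the left by $\Lo=-\Upsilon\Pi$ and using $\Lo(S-GL)=\Lo(-(-\sLo))$ gives $\Lo(sI-S+GL)=s\Lo-\Lo(S-GL)= s\Lo+\sLo$, wait — more carefully, $-\Lo(S-GL)=\Upsilon\Pi(S-GL)=\Upsilon\Pi\cdot(-(-\sLo))$; since $-\sLo=S-GL$ we get $\Lo(S-GL)=-\Upsilon\Pi(S-GL)$, and substituting $S-GL=-\sLo$ reorganizes $C\Pi(sI-S+GL)^{-1}G$ into $C\Pi(\sLo-s\Lo)^{-1}\Upsilon B$ after also writing $G=-\Lo^{-1}\Upsilon B$.

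I expect the main obstacle to be precisely this last algebraic rearrangement: one must track signs carefully and insert $\Lo=-\Upsilon\Pi$ in two different roles — once to rewrite $G$ and once to rewrite the resolvent $(sI-S+GL)^{-1}$ — and justify the intermediate inverse $(\sLo-s\Lo)^{-1}$ exists for generic $s$. Concretely, I would write $sI-S+GL = sI-(S-GL) = sI+\sLo$ on one hand is wrong dimensionally against $\Lo$, so instead I left-multiply by $\Lo$: $\Lo(sI-S+GL)=s\Lo-\Lo(S-GL)$, and since $\Lo(S-GL) = -\Upsilon\Pi(S-GL)$ while also $-\sLo = S-GL$ combined with the shifted-Loewner Sylvester relation, one obtains $\Lo(S-GL)=\sLo$; hence $\Lo(sI-S+GL)=s\Lo-\sLo=-(\sLo-s\Lo)$, so $(sI-S+GL)^{-1}=-(\sLo-s\Lo)^{-1}\Lo$. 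Then $K_G(s)=C\Pi\cdot\big(-(\sLo-s\Lo)^{-1}\Lo\big)\cdot\big(-\Lo^{-1}\Upsilon B\big)=C\Pi(\sLo-s\Lo)^{-1}\Upsilon B$, which is \eqref{eq_Loewner_approximant}. The remaining bookkeeping — checking $(L,S)$ observable, $(Q,\mathbf R)$ controllable, and the genericity of $s$ — follows the same pattern as in the earlier proofs and I would state it briefly rather than belabor it.
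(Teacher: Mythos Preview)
Your approach is exactly the paper's: invoke Theorem~\ref{thm_Lo_YPi_gen} to identify $\Lo=-\Upsilon\Pi$, then split $\Upsilon\Pi G=\Upsilon B$ into the two blocks \eqref{eq_G_pole} and \eqref{eq_Gd} and appeal to Theorems~\ref{thm_G_pole} and~\ref{thm_match_deriv}. The paper's proof is a single sentence (``the claim follows immediately''), so your expanded outline is entirely in the same spirit.

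One point in your transfer-function derivation needs tightening. You want $\Lo(S-GL)=\sLo$, and you try to get it from the identity $-\sLo=S-GL$ quoted from statement~(4) of Theorem~\ref{thm_Lo_YPi_gen}; but substituting that literally yields $\Lo(S-GL)=-\Lo\sLo$, not $\sLo$, so the appeal to ``the shifted-Loewner Sylvester relation'' is doing unexplained work. The clean route is to compute directly: with $\Upsilon\Pi G=\Upsilon B$ one has
\[
\Lo(S-GL)=-\Upsilon\Pi S+\Upsilon\Pi GL=-\Upsilon\Pi S+\Upsilon BL=-\Upsilon(A\Pi+BL)+\Upsilon BL=-\Upsilon A\Pi,
\]
using \eqref{eq_Sylvester_Pi}, and $-\Upsilon A\Pi$ is precisely the standard closed form of $\sLo$ (this is what the Sylvester identity in statement~(4) actually encodes). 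With that in hand your last paragraph goes through verbatim: $\Lo(sI-S+GL)=s\Lo-\sLo=-(\sLo-s\Lo)$, hence $(sI-S+GL)^{-1}=-(\sLo-s\Lo)^{-1}\Lo$, and multiplying by $G=-\Lo^{-1}\Upsilon B$ gives \eqref{eq_Loewner_approximant}.
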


\vspace*{-15pt}

\begin{proof}
Employing Theorem \ref{thm_Lo_YPi_gen} and using \eqref{eq_G_pole}, \eqref{eq_Gd} and \eqref{eq_Loewner_general}, the claim follows immediately.
\end{proof}
\vspace*{-10pt}
{\rev \begin{cor}\label{rem_LisG}
For a $\Sigma_G$ as in \eqref{redmod_CPi}, the solution $G$ of the system \eqref{eq_constraints_all}, with $k=0$, is identical with $G$ from \eqref{eq_Gd_Loewner_all}.
\end{cor}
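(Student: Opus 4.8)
The plan is to show that both descriptions of $G$ solve the \emph{same} linear system when $k=0$, and that that system has a unique solution. Setting $k=0$ in \eqref{eq_constraints_all} removes the zero-placement block, leaving exactly the two families of constraints $1+LD_k^{-1}G=0$ for $k=1:\ell$ (pole placement) and $\Ud\Pi G=\Ud B$ ($\nu-\ell$ first order moments). First I would invoke Proposition \ref{prop_G_pole_explicit}: under the diagonal $S$ and $L=[1\ \dots\ 1]$ used throughout Section \ref{sect_Loewner_PZD}, the equations $1+LD_k^{-1}G=0$, $k=1:\ell$, are \emph{equivalent} to the statement that $\{\lambda_1,\dots,\lambda_\ell\}$ are poles of $K_G$, which by Theorem \ref{thm_G_pole} (applied with the diagonal $\Qp$ of \eqref{eq_Q_Lo}) is in turn implied by $\Up\Pi G=\Up B$. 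Likewise the third block of \eqref{eq_constraints_all} is literally $\Ud\Pi G=\Ud B$. Stacking these, and recalling $\Upsilon=[\Up^T\ \Ud^T]^T$, the constraints from \eqref{eq_constraints_all} with $k=0$ are exactly $\Upsilon\Pi G=\Upsilon B$, i.e. $\Lo G=-\Upsilon B$ by part (2) of Theorem \ref{thm_Lo_YPi_gen}.

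Next I would note that $G$ from \eqref{eq_Gd_Loewner_all} is by construction the solution of $\Lo G=-\Upsilon B$, since $\Lo$ is assumed invertible there. So it remains only to check that the $G$ produced by \eqref{eq_constraints_all} with $k=0$ is also \emph{this} solution — i.e., that the $\ell+(\nu-\ell)=\nu$ scalar equations in \eqref{eq_constraints_all} are not merely implied by $\Upsilon\Pi G=\Upsilon B$ but in fact equivalent to it, so that no spurious extra freedom or inconsistency appears. This is where Proposition \ref{prop_G_pole_explicit} does the real work: it gives the pole constraints as a \emph{necessary and sufficient} condition, i.e. the $\ell$ scalar equations $1+LD_k^{-1}G=0$ hold \emph{iff} $\lambda_k\in\sigma(S-GL)$, which (via $\Up\Pi$ invertible on its $\ell$-dimensional range, as in the Observation following Theorem \ref{thm_G_pole}) matches the first $\ell$ rows of $\Upsilon\Pi G=\Upsilon B$. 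The remaining $\nu-\ell$ rows coincide verbatim. Hence the solution set of \eqref{eq_constraints_all} with $k=0$ equals the solution set of $\Lo G=-\Upsilon B$, which is the single point $-\Lo^{-1}\Upsilon B$.

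Finally I would assemble these observations: since $\Lo=-\Upsilon\Pi$ is invertible, the system $\Lo G=-\Upsilon B$ has the unique solution $G=-\Lo^{-1}\Upsilon B$; by the previous paragraph this is precisely the (unique) solution $G$ of \eqref{eq_constraints_all} with $k=0$; therefore the two coincide, proving the corollary.

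The main obstacle I anticipate is the bookkeeping around equivalence versus implication in the pole-placement block. Theorem \ref{thm_G_pole} only gives a \emph{sufficient} condition ($\Up\Pi G=\Up B \Rightarrow$ poles placed), whereas Proposition \ref{prop_G_pole_explicit} gives the \emph{equivalent} scalar form $1+LD_k^{-1}G=0$; to conclude that the two parametrizations of $G$ genuinely agree one must argue that, for the specific diagonal choices of $S$, $L$, $\Qp$ used in this section (so that $\rank(\Up\Pi)=\ell$ and, combined with the $\Ud$ block, $\Upsilon\Pi$ is invertible), the scalar pole equations together with $\Ud\Pi G=\Ud B$ are collectively equivalent to the full rank-$\nu$ system $\Upsilon\Pi G=\Upsilon B$, not a weaker consequence of it. Once invertibility of $\Lo=-\Upsilon\Pi$ is in force this is immediate by a dimension count, but it is worth stating explicitly. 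The rest is direct substitution.
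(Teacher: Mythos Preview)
Your proposal is correct and follows essentially the same route as the paper's own proof, which is a one-line appeal to Theorems~\ref{thm_G_pole}--\ref{thm_LisG}. You simply unpack that appeal: $G=-\Lo^{-1}\Upsilon B$ satisfies $\Upsilon\Pi G=\Upsilon B$, hence both $\Up\Pi G=\Up B$ and $\Ud\Pi G=\Ud B$; Theorem~\ref{thm_G_pole} then places the $\ell$ poles, Proposition~\ref{prop_G_pole_explicit} converts this into the scalar equations $1+LD_k^{-1}G=0$, and the derivative block is verbatim the last rows --- so this $G$ solves \eqref{eq_constraints_all} with $k=0$, and uniqueness (invertibility of $\Lo$) finishes it. Your added discussion of the sufficiency-versus-equivalence bookkeeping is more careful than the paper, which leaves that implicit.
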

\vspace*{-10pt}
\begin{proof}
Using Theorems \ref{thm_G_pole}--\ref{thm_LisG} successively, one immediately recovers \eqref{eq_Gd_Loewner_all} and the claim follows.
\end{proof}}
\vspace*{-10pt}
In general, for any non-derogatory matrices $Q$ and $S$, with $R$ and $L$ such that the pair $(L,S)$ is observable and the pair $(Q,R)$ is controllable, the matrix $\widehat\Lo=-\Upsilon\Pi$, with $\Upsilon$ and $\Pi$ the unique solutions of \eqref{eq_Sylvester_Yii} and \eqref{eq_Sylvester_Pi}, respectively, satisfies the properties of a Loewner matrix.
\begin{thm}\label{cor_Loewner_any}
Consider  system \eqref{system}. Let $S\in\mathbb{R}^{\nu\times \nu}$ be any matrix with $\sigma(S)=\{s_1,s_2,\dots, s_\ell,s_{\ell+1},\dots, s_\nu\}$  not poles of \eqref{tf} and $L\in\mathbb{R}^{1\times\nu}$ such that the pair $(L,S)$ is observable. Also let $Q\in\mathbb R^{\nu\times \nu}$ be any matrix with $\sigma(Q)=\{\lambda_1,\dots,\lambda_\ell,s_{\ell+1},\dots, s_\nu\}$, not poles of \eqref{tf}. Furthermore, let $\Pi$ be the unique solution of the Sylvester equation \eqref{eq_Sylvester_Pi}, and $\Upsilon$ be the unique solution of \eqref{eq_Sylvester_Yii}. Then, the matrices
\begin{subequations}\label{eq_Loewner_any_all}
\begin{align}
\widehat\Lo & = -\Upsilon\Pi, \label{eq_Loewner_any}\\
\widehat\sLo & = S-(\Upsilon\Pi)^{-1}\Upsilon BL \label{eq_sLoewner_any}
\end{align}
\end{subequations}
satisfy the equations 
\begin{subequations}\label{eq_LoewnerSylv_any_all}
\begin{align}
\widehat\Lo S-Q\widehat\Lo    & = L^T\widetilde C\Pi-\Upsilon BL, \label{eq_LoewnerSylv_any}\\
\widehat\sLo S-Q\widehat\sLo & = L^TC\Pi S-Q\Upsilon BL.\label{eq_sLoewnerSylv_any}
\end{align}
\end{subequations}
Furthermore, $\widehat\Lo=T_Q^{-1}\Lo T_S,$ where $T_Q\in\mathbb R^{\nu\times\nu}$ is such that  $T_QQT_Q^{-1}=\diag(\lambda_1,\dots,\lambda_\ell,s_{\ell+1}, \dots, s_\nu)$ and $T_S\in\mathbb R^{\nu\times\nu}$ is such that $T_SST_S^{-1}=\diag(s_1,\dots, s_\ell,s_{\ell+1},\dots, s_\nu).$
\end{thm}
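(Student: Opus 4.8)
The plan is to establish the two claims separately. First I would prove the Sylvester identities \eqref{eq_LoewnerSylv_any_all}, which is pure algebra on \eqref{eq_Sylvester_Pi} and \eqref{eq_Sylvester_Yii} and uses no structure of $S$ or $Q$ beyond $\sigma(S)\cap\sigma(A)=\emptyset$ (for well-posedness of $\Pi$), $\sigma(Q)\cap\sigma(A)=\emptyset$ (for $\Upsilon$) and invertibility of $\Upsilon\Pi$. Then I would obtain the similarity $\widehat\Lo=T_Q^{-1}\Lo T_S$ from the uniqueness of the solutions of the Sylvester equations under a change of coordinates, combined with the already-proved identity $\Lo=-\Upsilon\Pi$ of Theorem \ref{thm_Lo_YPi_gen} and the analogous identity for $\sLo$.

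For \eqref{eq_LoewnerSylv_any}, substitute $\widehat\Lo=-\Upsilon\Pi$ and compute $\widehat\Lo S-Q\widehat\Lo=-\Upsilon\Pi S+Q\Upsilon\Pi$; replacing $Q\Upsilon$ by \eqref{eq_Sylvester_Yii} and then $A\Pi$ by \eqref{eq_Sylvester_Pi} makes the $\Upsilon\Pi S$ terms cancel and leaves $\widehat\Lo S-Q\widehat\Lo=\mathbf R(\Cp,C)\Pi-\Upsilon BL$, which is the claimed right-hand side once $\mathbf R(\Cp,C)\Pi$ is identified with $L^T\widetilde C\Pi$. For \eqref{eq_sLoewnerSylv_any}, use \eqref{eq_Loewner_any}, \eqref{eq_sLoewner_any} to write $-\widehat\Lo\widehat\sLo=\Upsilon\Pi\bigl(S-(\Upsilon\Pi)^{-1}\Upsilon BL\bigr)=\Upsilon\Pi S-\Upsilon BL=\Upsilon A\Pi$ by \eqref{eq_Sylvester_Pi}, and then expand $\widehat\sLo S-Q\widehat\sLo$ with the same two substitutions; this is the computation left implicit in the ``arguments for $\sLo$ are similar'' part of the proof of Theorem \ref{thm_Lo_YPi_gen}, carried out now without assuming $S,Q$ diagonal.

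For the similarity, I would use that the $s_i$ are distinct points which are not poles of \eqref{tf} and that $(L,S)$ is observable to pick $T_S$ with $T_SST_S^{-1}=\diag(s_1,\dots,s_\nu)$ and, after the rescaling of eigenvectors that observability permits, $LT_S^{-1}=[1\ \dots\ 1]$; then $\Pi T_S^{-1}$ solves the Sylvester equation of the diagonal construction, so by uniqueness it equals the $\Pi$ of \eqref{eq_S_Lo}. Likewise, controllability of $(Q,\mathbf R)$ lets me choose $T_Q$ with $T_QQT_Q^{-1}=\diag(\lambda_1,\dots,\lambda_\ell,s_{\ell+1},\dots,s_\nu)$ transporting the right-hand side of \eqref{eq_Sylvester_Yii} to the normalized form, so that $T_Q\Upsilon$ is the $\Upsilon$ of the diagonal construction. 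Invoking Theorem \ref{thm_Lo_YPi_gen} then gives $\Lo=-(T_Q\Upsilon)(\Pi T_S^{-1})=T_Q(-\Upsilon\Pi)T_S^{-1}=T_Q\widehat\Lo T_S^{-1}$, i.e., $\widehat\Lo=T_Q^{-1}\Lo T_S$, and repeating the argument with $\sLo$ and $\widehat\sLo$ yields the shifted counterpart.

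The main obstacle is exactly the bookkeeping in the similarity step: one must check that the auxiliary data $L$, $R$, $\Cp$, $\Rp$ feeding the two Sylvester equations transform consistently under $T_S$ and $T_Q$, so that the transported $\Pi$ and $\Upsilon$ are \emph{literally} the matrices defining the honest Loewner pair $(\Lo,\sLo)$ of \eqref{eq_Loewner_general} — in particular that observability of $(L,S)$ and controllability of $(Q,\mathbf R)$ are what allow $L$ and $\mathbf R$ to be normalized to the all-ones form used there. The algebraic part (the Sylvester identities) is routine and essentially a coordinate-free rerun of the computations already displayed for Theorem \ref{thm_Lo_YPi_gen}.
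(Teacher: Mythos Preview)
Your proposal is correct and follows essentially the same route as the paper: derive \eqref{eq_LoewnerSylv_any} by substituting \eqref{eq_Sylvester_Yii} and \eqref{eq_Sylvester_Pi} into $-\Upsilon\Pi S+Q\Upsilon\Pi$, then handle $\widehat\sLo$ by the analogous manipulation (the paper shortcuts this step by citing \cite[Proposition~3.1]{mayo-antoulas-LAA2007}, whereas you compute it directly via $-\widehat\Lo\widehat\sLo=\Upsilon A\Pi$, which is fine), and finally obtain the similarity $\widehat\Lo=T_Q^{-1}\Lo T_S$ by transporting the Sylvester equations under the diagonalizing changes of coordinates $T_S,T_Q$ and invoking uniqueness---exactly what the paper means by ``follows straightforwardly when applying the coordinate transformations $T_Q$ and $T_S$.'' Your explicit bookkeeping on normalizing $L$ and $\mathbf R$ via observability/controllability is the correct way to flesh out that one-line remark.
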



\begin{proof}
Pre-multiplying \eqref{eq_Sylvester_Pi} with $\Upsilon$ yields $\Upsilon A\Pi+\Upsilon BL=\Upsilon\Pi S$. By \eqref{eq_Sylvester_Yii}, $\Upsilon A=Q\Upsilon-\mathbf R(\widetilde C, C)$. Hence 
$
(Q\Upsilon-\mathbf R(\widetilde C, C))\Pi+\Upsilon BL=\Upsilon\Pi S
\Leftrightarrow Q\Upsilon\Pi-\Upsilon\Pi S=\mathbf R(\widetilde C, C)\Pi-\Upsilon BL,
$
which is equivalent to the Sylvester equation satisfied by $\mathbb L=-\Upsilon\Pi$ in \cite[equation (12)]{mayo-antoulas-LAA2007}. It follows that $\Upsilon\Pi(S-(\Upsilon\Pi)^{-1}\Upsilon BL)=(Q-\mathbf R(\widetilde C, C)\Pi(\Upsilon\Pi)^{-1})(\Upsilon\Pi)$. Hence, one can write $Q\widehat\Lo+\mathbf R(\widetilde C, C)\Pi=-S+(\Upsilon\Pi)^{-1}\Upsilon BL$. By \cite[Proposition 3.1]{mayo-antoulas-LAA2007}, the claim follows immediately. The second claim follows straightforwardly when applying the coordinate transformations $T_Q$ and $T_S$.\fin
\end{proof}}
%
\begin{table*}[!t]
\setlength{\tabcolsep}{3pt}
\centering
\begin{tabular}{|c|c|c|M{1.65cm}|c|c|M{1.65cm}|c|c|M{1.95cm}|c|c|}
\hline
$\nu$ &  $\ell$ & $k$ &Max $\re(p(K_G))$ & $\|K\!\!-\!\!K_G\|_{2}$ &  $K_G(0)$ & Max $\re(p(K_{\text{BT}}))$ &  $\|K\!\!-\!\!K_{\text{BT}}\|_{\infty}$& $K_{\text{BT}}(0)$ & Max $\re(p(K_{\text{IRKA}}))$ & $\|K\!\!-\!\!K_{\text{IRKA}}\|_{2}$& $K_{\text{IRKA}}(0)$   \\
\hline
\hline
\multirow{3}{*}{3} & 3 & 0 & $-7.4\cdot 10^{-1}$ & 1.523 &\multirow{10}{*}{4.5661}& \multirow{3}{*}{$-2.26\cdot 10^{-5}$} & \multirow{3}{*}{$6.6\cdot 10^{-2}$} & \multirow{3}{*}{$4.7206$} &\multirow{3}{*}{$-2.26\cdot 10^{-5}$} & \multirow{3}{*}{$2.09\cdot 10^{-3}$}& \multirow{3}{*}{$4.6395$} \\
\cline{2-5} & 2 & 1 & $-2.91\cdot 10^{-4}$ & 1.10 & & & & & & &\\
\cline{2-5} & 0 & 0 & $2\cdot 10^{-1}$     & $\infty$ & & & & & & &\\
\cline{1-5} \cline{7-12}
\multirow{3}{*}{6} & 6 & 0 & $-7.05\cdot 10^{-1}$ & 1.22 & &\multirow{3}{*}{$-2.26\cdot 10^{-5}$} & \multirow{3}{*}{$8.83\cdot 10^{-4}$}&\multirow{3}{*}{$4.6554$} & \multirow{3}{*}{$-2.26\cdot 10^{-5}$} &\multirow{3}{*}{$5.28\cdot 10^{-5}$}&\multirow{3}{*}{$4.657$}\\
\cline{2-5} & 4 & 2 & $-7.9\cdot 10^{-4}$ & 1.09 & & & & & & &\\
\cline{2-5} & 0 & 0 & $2.12\cdot 10^{-2}$ & $\infty$ & & & & & & &\\
\cline{1-5} \cline{7-12}
\multirow{3}{*}{12} & 12 & 0 & $-5.4\cdot 10^{-3}$ & 8.07 & &\multirow{3}{*}{$-2.26\cdot 10^{-5}$} & \multirow{3}{*}{$1.41\cdot 10^{-4}$}&\multirow{3}{*}{$4.653$} & \multirow{3}{*}{$-2.26\cdot 10^{-5}$} & \multirow{3}{*}{$1.37\cdot 10^{-5}$} &\multirow{3}{*}{$4.655$}\\
\cline{2-5} & 5  & 3 & $-1.86\cdot 10^{-6}$ & $4.16\cdot 10^{-3}$ & & & & & & &\\
\cline{2-5} & 0  & 0 & $8.64\cdot 10^{-1}$ & $\infty$ & & & & & & &\\
\hline
\end{tabular}
\caption{Simulation results for $\Sigma_G$ of order $\nu$ with $\ell$ poles and $k$ zeros and $\nu-(\ell+k)$ derivatives matched versus the BT and the IRKA.}
\label{tab_comp}
\vspace*{-15pt}
\begin{eqnarray*}
 \\
\hline
\end{eqnarray*}
\vspace{-20pt}
\end{table*}

\vspace{-10pt}

\section{Illustrative examples}\label{sect_expl}

{\rev
\subsection{Computational complexity}
The approximations that match $\nu$ zero order moments  at $s_i$, $i=1:\ell$,  place $\ell$ prescribed poles, $k$ prescribed zeros and match $\nu-(\ell+k)$ first order moments are computed employing \eqref{redmod_CPi} and then solving the linear system \eqref{eq_constraints_all} with complexity $\mathcal{O}(\nu^3)$. 
Using Theorem \ref{thm_LisG}, the storage of the $\nu\times\nu$ Loewner matrices and the inversion of $\Lo$ with complexity $\mathcal{O}(\nu^3)$ are required. The Sylvester equations involved can be solved efficiently using Krylov techniques, see, e.g., \cite{antoulas-2005} and the references therein. The simulations have been performed under Maple 2018 and Matlab R2015b, on a desktop equipped with 4GB RAM, 2.2MHz CPU and Windows 10.

\subsection{Cart controlled by a double pendulum}
\label{sect_expl_cart}

\noindent Consider the  cart system controlled by a double-pendulum controller, with $6$ states, with the matrices $A \in \mathbb{R}^{6 \times 6}, B \in \mathbb{R}^{6 \times 1}$ and $C \in \mathbb{R}^{1 \times 6}$, see \cite{i-TAC2016} for the explicit matrices.
The poles of the system are $\{-1.6 + 6.63j, -1.6 - 6.63j, -0.74 + 3.48j,-0.74 - 3.48j,-0.16 + 0.55j, -0.16 - 0.55j\}$, i.e., the system is stable.
Take the interpolation points 
$\{0, 1/4, 1/2\}
$.
\begin{figure}[!b]
\centering
\begin{subfigure}[b]{0.17\textwidth}
\includegraphics[width=\textwidth]{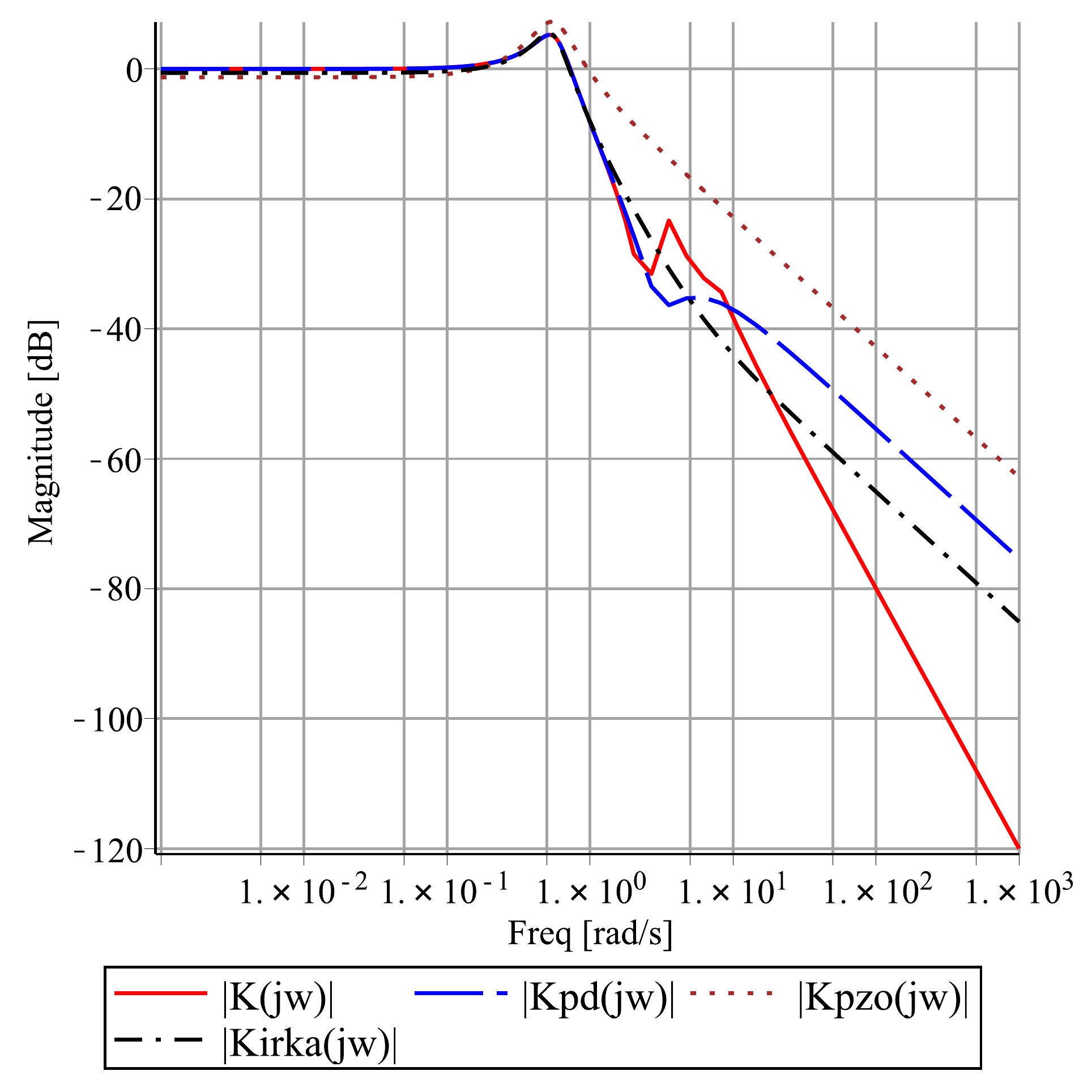}
\caption{The models}\label{fig_c2p_approx_Loewner}
\end{subfigure}
\hspace{0cm}
\begin{subfigure}[b]{0.17\textwidth}
\includegraphics[width=\textwidth]{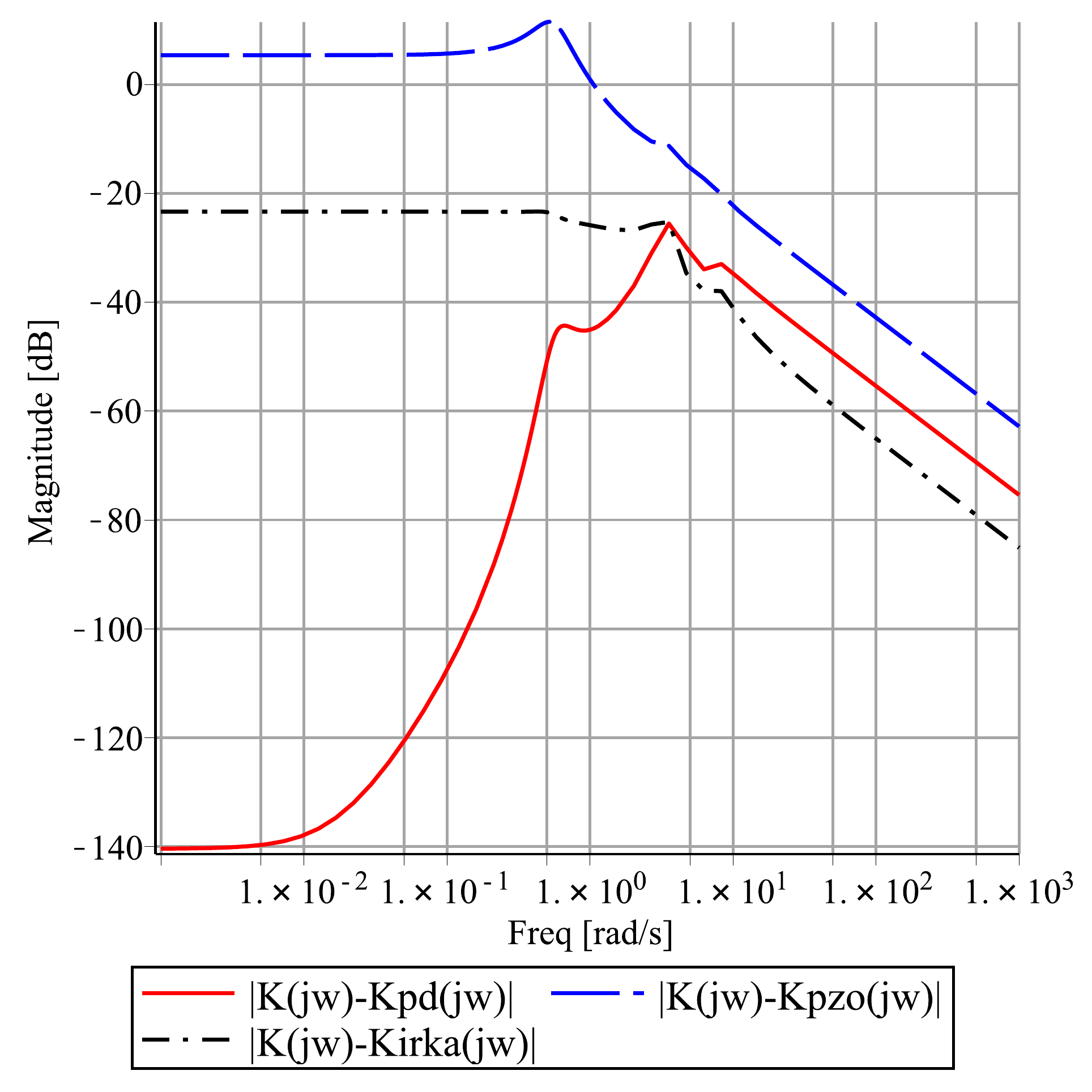}
\caption{Error systems}\label{fig_c2p_ERR_Loewner}
\end{subfigure}
\hspace{0cm}
\begin{subfigure}[c]{0.12\textwidth}
\vspace*{-105pt}
\includegraphics[width=\textwidth]{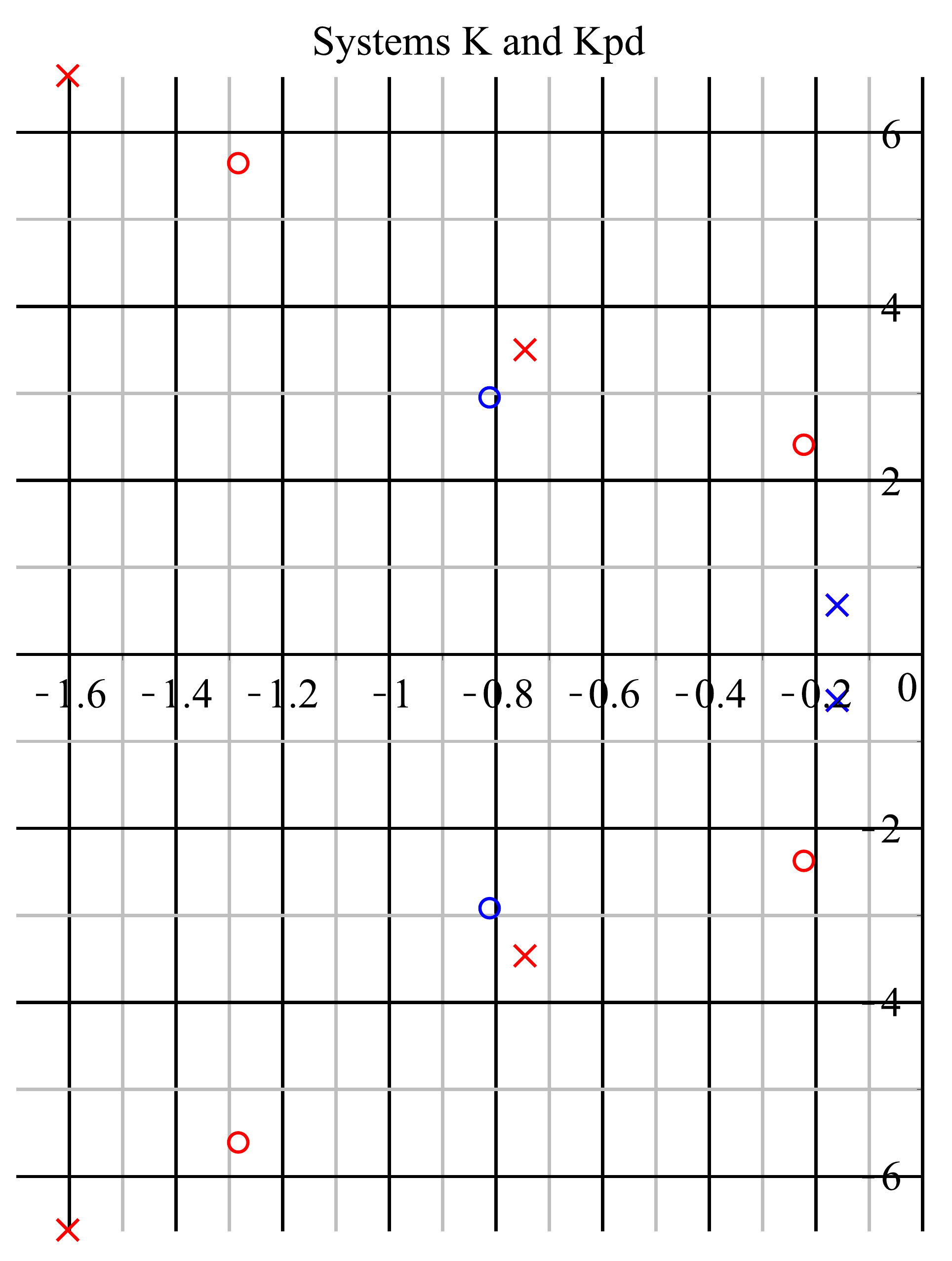}
\vspace{-2pt}
\caption{PZ map}\label{fig_c2p_zpk_L}
\end{subfigure}
\caption{Magnitude plots of the models \eqref{fig_c2p_approx_Loewner} and of the approximation errors \eqref{fig_c2p_ERR_Loewner} and the pole-zero map \eqref{fig_c2p_zpk_L} of the cart controlled by a double pendulum $K$ and the third order approximations $K_{\rm pd}, K_{\rm pzo}$ and $K_{\rm IRKA}$}\label{fig_c2p_L}
\end{figure}

Pick $L$ such that the pair $(L,S)$ is observable. Furthermore, using the solution $\Pi$ of \eqref{eq_Sylvester_Pi} yields $C\Pi=[1\   0.69\    0.45]$.  We build the third order model $K_{\rm pd}$ with the properties that $K_{\rm pd}$ matches the moments at $\sigma(S)=\{0,1/4,1/2\}$, $K'_{\rm pd}(0)=K'(0)$ and $K_{\rm pd}$ has poles at $-0.16\pm 0.55j$. Compute and store the Loewner matrices from \eqref{eq_Loewner_general}. Moreover, using $\Upsilon$, the solution of \eqref{eq_Sylvester_Y} one gets $\Upsilon B=[1\ 0.7 \ 0.5]^T$. Using \eqref{eq_Loewner_approximant} yields $K_{\rm pd}(s)=(0.3638s^2+0.58863s+3.37312)/(8.573s^3+44.283s^2+15.847s+13.493).$ The resulting $H_2$-error norm of the approximation is $1.41\cdot 10^{-1}$ and the approximant has two poles at $-0.16\pm 0.55j$. Using a family of third order models is described by ${\Sigma}_{G}$ as in (\ref{redmod_CPi}) and selecting
$G=-\Lo^{-1}\Upsilon B=[-12.591\ 0.992\ -31.3163]^T$ also leads to $K_{\rm pd}$.
We now compare with the third order model $K_{\rm pzo},$ yielded by the gradient method proposed in \cite{i-iftime-necoara-ECC2020}, with constraints of fixing a pole at $-1.6$ and a zero at $-1.28$, with the state-space realization given by \cite[eq. (35)]{i-iftime-necoara-ECC2020}, denoted by $K_{\rm pzo}$.
%
%
The $H_2$ norm of the approximation error achieved by $K_{\rm pzo}$ is $8.7\cdot 10^{-3}$. The poles of  $K_{\rm pzo}$ are $\{-1.6, -0.18 + 0.53j, -0.18 - 0.53j\}$ and the zeros are $\{-1.28, -45.92\}$, i.e., a stable and minimum phase third order approximation. Furthermore, we compare with the third order IRKA model, see, e.g., \cite{gugercin-antoulas-beattie-SIAM2008}. The IRKA is initialized in $\sigma(S)$. The resulting approximation is given by $K_{\rm IRKA}(s)=(0.05563s^2+0.2464s-0.3018)/(s^3-0.7795s^2-0.00163s-0.3238),$ with poles at $1.065983722, -0.1432418608\pm 0.5322017973j$ and the  $H_2$-error norm of order $7\cdot 10^-3$. 
Figure \ref{fig_c2p_approx_Loewner} shows that since moment matching at zero is imposed, all the approximations behave well at low frequency. The model $K_{\rm pd}$ exhibits almost identical responses to the harmonic inputs of frequencies up to approximately 6 rad/sec., whereas the rest preserve similar behaviours on smaller frequency sets, even if they appear more accurate. Figure \ref{fig_c2p_ERR_Loewner} shows that $K_{\rm pd}$ achieves a good $H_\infty$-approximation error. Figure \ref{fig_c2p_zpk_L} shows that the model $K_{\rm pd}$ preserves the imposed pole location. 

}
\subsection{CD player}
\label{sect_expl_CD}

\begin{figure}[!b]
\centering
\begin{subfigure}[b]{0.23\textwidth}
\includegraphics[width=\textwidth]{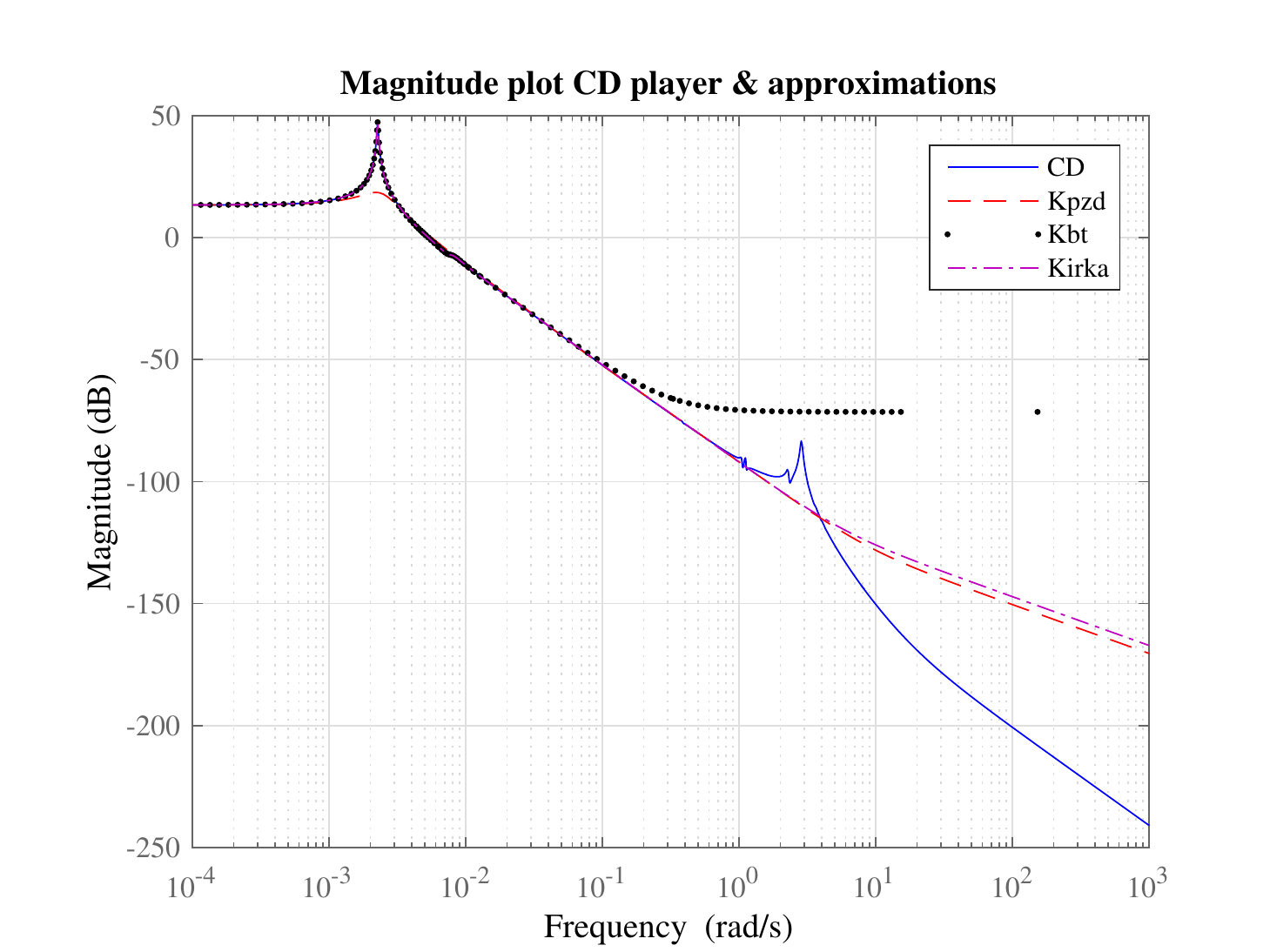}
\caption{$\nu=6,\ell=4,k=2$}\label{fig_642CDp_models}
\end{subfigure}
\hspace{0cm}
\begin{subfigure}[b]{0.23\textwidth}
\includegraphics[width=\textwidth]{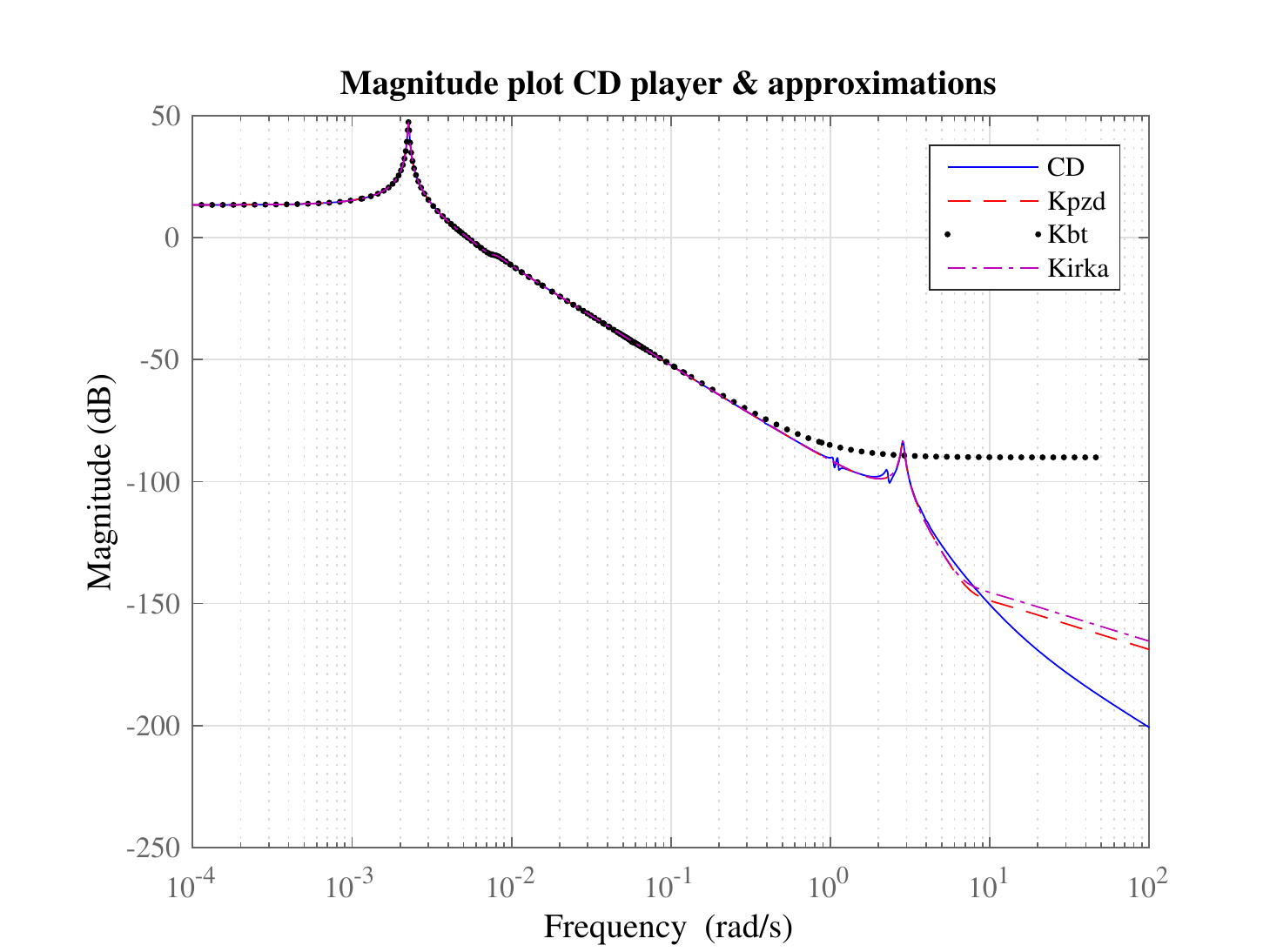}
\caption{$\nu=12,\ell=5,k=3$ }\label{fig_1253CDp_models}
\end{subfigure}
\caption{Magnitude plots of the models of the 120-th order CD player model (solid blue), the proposed models (dashed red), the $\nu$ order BT model (dotted black) and the $\nu$ order IRKA model (dash-dotted magenta)}\label{fig_642CDp}
\end{figure}

Consider the CD player, a single input single output LTI system with $n=120$, see, e.g.,  \cite{antoulas-2005,gugercin-antoulas-beattie-SIAM2008}.  Let 
$
S=\diag(s_1,\dots, s_\ell,s_{\ell+1},\dots, s_\nu)
$
such that $s_i, i=1:\nu$ is not an eigenvalue of $A$ and let $L=[1\ 1\ \dots\ \ 1]=[L_1\ L_2]$. Note that the matrix pair $(L,S)$ is observable. Furthermore, arbitrarily fix the sets of numbers
$
\{\lambda_1,\dots,\lambda_\ell\},
$
such that $s_j\ne\lambda_j, j=1:\ell$ and $\{z_1,\dots,z_k\}$, such that $\ell+k\leq\nu$. Let $\Pi$ be the solution of the Sylvester equation \eqref{eq_Sylvester_Pi}. Since $S$ is diagonal, $\Pi$  can be computed explicitly as in \cite[Lemma 2]{astolfi-TAC2010,i-astolfi-colaneri-SCL2014}.
We now write the family of $\nu$ order models $\Sigma_G$ as in \eqref{redmod_CPi}, parametrized in $G\in\mathbb R^{\nu}$, that match the moments of the CD player system at $\{s_1,\dots, s_\nu\}$.
Build the matrix $D_k=\diag(\theta_{k1},\dots,\theta_{k\nu}), \theta_{ki}=\lambda_k-s_i,\ i=1:\nu,\ k=1:\ell,$ and the numbers $\gamma_{ji}=z_j-s_i,\ i=1:\nu,\ j=1:k$. Also consider $\Ud$, the unique solution of the Sylvester equation $\Sd\Ud=\Ud A+RC$, where $R=L_2^T$ and $\Sd=\diag(s_1,\dots, s_\ell,s_{\ell+1},\dots, s_\nu)$. Note that since $\Sd$ is diagonal, $\Ud$  can be written explicitly as in \cite[Lemma 2]{astolfi-TAC2010,i-astolfi-colaneri-SCL2014}.
In the sequel we compute the matrices $G$ that yield the approximations $\Sigma_G$ (with the transfer function $K_G$) of order $\nu$ which
has $\ell$ poles at $\lambda_1,\dots,\lambda_\ell$, $k$ zeros at $z_1,\dots,z_k$ and satisfies the property that the derivatives of $K_G$ match the derivatives of $K$ at $s_{(\ell+k)+1},\dots,s_\nu$. We compute $G$ for $\nu=3, 6, 12$, for different values of $\ell$ and $k$. 
{\rev Note that, based on the results of Theorem \ref{thm_LisG} and Corllary \ref{rem_LisG}, instead of computing the family $\Sigma_G$, we can compute the Loewner matrices \eqref{eq_Loewner_general} and obtain identical results.%
}
We compare the results of the proposed method with the $\nu$ order balanced truncation approximation $K_{\text{BT}}$ and the $\nu$ order Iterative Rational Krylov Algorithm (IRKA) approximation, $K_\text{IRKA}$. The results of the simulations are presented in Table \ref{tab_comp}. Note that the set of interpolation points is chosen \emph{arbitrarily} in the complex plane and it contains zero for DC-gain preservation. Moreover, the selected interpolation points are also used for initializing the IRKA algorithm. Due to the lack of other constraints in the choice of the interpolation points, the approximation that matches $\nu$ derivatives of the given system at these points may yield unstable approximations. Furthermore, matching a significant number of derivatives numerically/practically ensures the decrease in the $H_2/H_\infty$-norm of the approximation error. 
{\rev
Figure \ref{fig_642CDp} illustrates the matching of the low frequency beahaviour performed by the proposed models. 
}
The example illustrates that the proposed approach yields reduced order models that allow for a trade-off between the good $H_2/H_\infty$-norm of the approximation error and the desired pole-zero placement. 
%


\section{Conclusions}


In this paper we have computed a low order approximation that matches the moments of a given large LTI system, has certain poles and zeros fixed and matches a number of selected high order moments. We have proposed linear algebraic systems whose solutions yield the models that meet the constraints. We have also extended the results to  data-based model reduction using  Loewner matrices that include the constraints. For future work, we extend the given results to other classes of systems such as infinite-dimensional and nonlinear.




\end{document}